\documentclass{article}
\usepackage[letterpaper,margin=1.0in]{geometry}
\usepackage{amsthm}

\usepackage{amsmath,amsfonts,bm,amssymb}

\def\eqref#1{equation~\ref{#1}}

\def\1{\bm{1}}

\def\vzero{{\bm{0}}}

\def\vd{{\bm{d}}}

\def\vh{{\bm{h}}}

\def\vu{{\bm{u}}}
\def\vv{{\bm{v}}}
\def\vw{{\bm{w}}}
\def\vx{{\bm{x}}}
\def\vy{{\bm{y}}}
\def\vz{{\bm{z}}}

\def\mA{{\bm{A}}}
\def\mB{{\bm{B}}}

\def\mI{{\bm{I}}}

\def\mO{{\bm{O}}}

\DeclareMathAlphabet{\mathsfit}{\encodingdefault}{\sfdefault}{m}{sl}
\SetMathAlphabet{\mathsfit}{bold}{\encodingdefault}{\sfdefault}{bx}{n}

\def\gO{{\mathcal{O}}}

\def\sO{{\mathbb{O}}}

\def\sR{{\mathbb{R}}}

\newtheorem{thm}{Theorem}[section]
\newtheorem{dfn}{Definition}[section]

\newtheorem{lem}{Lemma}[section]
\newtheorem{asm}{Assumption}[section]

\newtheorem{cond}{Condition}[section]

\usepackage{mdframed}
\usepackage{hyperref}
\usepackage{enumitem}
\usepackage[utf8]{inputenc} 
\usepackage[T1]{fontenc}    
\usepackage{hyperref}       
\usepackage{url}            
\usepackage{booktabs}       
\usepackage{amsfonts}       
\usepackage{nicefrac}       
\usepackage{microtype}      
\usepackage{graphicx}       
\usepackage[numbers,sort]{natbib}
\usepackage{multirow}
\usepackage{bbding}
\usepackage{caption}
\graphicspath{{media/}}     
\usepackage{color}
\usepackage{algorithmic,algorithm}
\usepackage{nicefrac}
\usepackage{cancel}
\usepackage{graphicx} 
\usepackage{booktabs}
\usepackage{makecell}
\usepackage{multirow}
\usepackage{amssymb,amsthm}
\usepackage{tcolorbox}
\usepackage[table]{xcolor}
\usepackage{pifont}
\usepackage{multirow, hhline}
\definecolor{mydarkgreen}{RGB}{39,130,67}

\definecolor{mydarkred}{RGB}{192,47,25}

\usepackage{thmtools}
\usepackage{thm-restate}

\title{Near-Optimal Convex Optimization with \\
Lazy Second-Order Oracles}

\author{Xinliang Zhang \textsuperscript{* 1} \qquad Lesi Chen \textsuperscript{* 1}\\
Chengchang Liu \textsuperscript{2}
\qquad Jingzhao Zhang \textsuperscript{1} \\
\vspace{-2mm} \\
\normalsize{\textsuperscript{1}IIIS, Tsinghua University\quad 
\textsuperscript{2} Department of Artificial Intelligence, Westlake University. 
} \\
\normalsize{ \texttt{\{xinliang23,chenlc23\}@mails.tsinghua.edu.cn,  }} \\
\normalsize{\texttt{7liuchengchang@gmail.com,
jingzhaoz@mail.tsinghua.edu.cn
}}}

\begin{document}
\maketitle
\begingroup
\begin{NoHyper}
\renewcommand\thefootnote{*}
\footnotetext{Equal contributions.}
\end{NoHyper}
\endgroup
\begin{abstract}
This paper studies the complexity of convex optimization using lazy second-order oracles (Doikov, Chayti, and Jaggi, ICML 2023), where an algorithm queries gradients every iteration and Hessians once per $m$ iterations. Under this setting, we show a lower bound of $\Omega(m+ m^{1/7} \epsilon^{-2/7})$ on the number of total iterations to find an $\epsilon$-solution using a novel block zero-chain construction. Then we propose a novel method that achieves a new upper bound of $\tilde \gO(m+ m^{1/7} \epsilon^{-2/7})$, which significantly improves the prior one (Chen, Liu, Luo, and Zhang, COLT 2026) of $\tilde \gO(m+ m^{13/21} \epsilon^{-2/7})$ and is tight up to logarithmic factors.
\end{abstract}

\section{Introduction}
In this work, we consider the complexity of solving the convex optimization problem
\begin{align} \label{prob:main}
\min_{\vx \in \sR^d} f(\vx).
\end{align}
Following the seminal work by \citet{nemirovskij1983problem}, we consider the oracle complexity for convex optimization. In this framework, we assume that every algorithm is interacting with an oracle $\sO(\vx)$ that returns the local information of query $\vx$. Two fundamental oracles are: (1) first-order oracle $\sO_1(\vx) = \nabla f(\vx)$ returns the gradient information; (2) the second-order oracle $\sO_2(\vx) = (\nabla f(\vx), \nabla^2 f(\vx))$ additionally returns a Hessian matrix that reflects the curvature information. The core question in oracle complexity is the minimal number of oracle queries $T$ required to find an $\epsilon$-solution to Problem (\ref{prob:main}).

Both the optimal complexities are known in the \textit{standard oracle model} that counts the complexity by
\begin{align} \label{eq:oracle-model}
    \# \sO_1(\vx)~~~~~\text{or}~~~~~\# \sO_2(\vx)
\end{align}
for first- or second-order methods, respectively.
In 1983, \citet{nesterov1983method} discovered the optimal first-order methods, which can find an $\epsilon$-solution to the problem in the first-order complexity of $\gO(\epsilon^{-1/2})$ that matches the lower bound in \citet{nemirovskij1983problem} up to constants. Over the past two decades, a series of works studying how to use second-order information for further acceleration. In 2006, \citet{nesterov2006cubic} introduced the cubic regularized Newton (CRN) method as the first globally convergent second-order method, based on which \citet{nesterov2008accelerating} in 2008 proposed the accelerated CRN with the second-order oracle complexity of $\gO(\epsilon^{-1/3})$.  In 2013, \citet{monteiro2013accelerated} proposed a faster accelerated Newton
proximal extragradient (A-NPE) method with the complexity of 
$\gO(\epsilon^{-2/7} \log \epsilon^{-1})$. In 2022, \citet{carmon2022optimal,kovalev2022first} improved A-NPE to achieved the optimal complexity of $\gO(\epsilon^{-2/7})$ that matches the lower bound in \citet{arjevani2019oracle}.

Although the optimal complexities have been established for the standard oracle model,
we argue that it may not reflect the computational costs of an algorithm.
Firstly, the standard oracle model restricts the algorithm to access either the first-order oracle or the second-order oracle. 
While in practice, the algorithm may access $\sO_2(\vx)$  at some iterations and only $\sO_1(\vx)$ for other iterations.
Secondly, the standard oracle model does not distinguish the costs between the Hessian and the gradient. 
The computational cost of a Hessian matrix is usually much higher the one of a gradient. 
Therefore, a more reasonable setup is to assume that the cost of $\sO_2(\vx)$ is $m$ times the cost of $\sO_1(\vx)$ and to use the following \textit{cost model}, which measures the complexity of an algorithm by: 
\begin{align} \label{eq:cost-model}
    \# \sO_1(\vx)+ m \cdot\# \sO_2(\vx).
\end{align} 
Under this new cost model, the prior optimal first- and second-order methods have the complexity of $\gO(\epsilon^{-1/2})$ and $\gO(m \epsilon^{-2/7})$, respectively. 
We observe that the complexity of this cost model can be reduced to count the complexity of \textit{$m$-lazy second-order oracle} which returns $\sO_2(\vx)$ once every $m$ iterations and $\sO_1(\vx)$ for the other iterations~\citep{doikov2023second,chayti2023unified}.
Very recently, \citet{chen2026faster} gives an $\tilde \gO(m +m^{13/21} \epsilon^{-2/7})$ upper bound of $m$-lazy second-order oracle for convex optimization, 
which strictly improves all  existing second-order methods and achieves an optimal dependency in $\epsilon$. 
However, 
the optimal dependency on $m$  remains unknown.

\paragraph{Our contributions.} In this paper, we fully resolve the optimal complexity for convex optimization using lazy second-order oracles, with the contributions summarized as follows.
\begin{enumerate}
    \item We introduce a block zero-chain that results in a lower bound of $\Omega(m+m^{1/7} \epsilon^{-2/7} )$ for any $m$-lazy second-order algorithms. In contrast, prior applicable lower bounds based on standard zero-chains \citep{nesterov2018lectures,arjevani2019oracle} can not distinguish the information of first- and second-order oracles, and can only give the trivial lower bound of $\Omega(\epsilon^{-2/7})$ even for $m \ge $1.
    \item We also propose a novel second-order method that achieves the upper bound of $\tilde \gO( m+ m^{1/7} \epsilon^{-2/7} )$, which matches our lower bound up to logarithmic factors. Our result builds upon the previous framework by \citet{chen2026faster} that reduces the problem to solving a series of cubic proximal subproblems, while we design a faster sub-solver based on a preconditioned accelerated gradient descent with segment search on an extrapolated objective.
\end{enumerate}

\paragraph{AI Usage.} The authors derived the lower bound of $\Omega(m+m^{1/7} \epsilon^{-2/7})$ and utilized this benchmark to evaluate AI reasoning capabilities. While ChatGPT 5.5 Pro and 5.6 Pro Sol both failed to yield a non-trivial dependence on $m$ for the lower bound, ChatGPT 6 Astra ultimately recovered the exact same construction as the authors. Meanwhile, the upper bound was established through multiple iterations with these AI models. ChatGPT 5.6 Pro Sol first established an upper bound of $\tilde{\mathcal{O}}(m + m^{3/7} \epsilon^{-2/7})$ by improving the cubic proximal sub-solver via accelerated inexact second-order methods \citep{chen2026optimal}. 
Later, through extensive interactions with the newly released ChatGPT 6 Astra, the upper bound was first sharpened to $\tilde{\mathcal{O}}(m + m^{1/3} \epsilon^{-2/7})$, and finally to the near-optimal complexity of $\tilde{\mathcal{O}}(m + m^{1/7} \epsilon^{-2/7})$. The initial responses by AI are publicly available~\footnote{https://chatgpt.com/share/6aae90a9-3250-83e8-82cb-2e532e8c0ab1}. 



 


\section{Preliminaries}

\paragraph{Notations.} We use the notation $\|\cdot\|$ to denote both the Euclidean norm of a vector and the spectral norm of a matrix. Given a matrix $\mB\succeq \mO_d$, we use
$\|\mathbf{v}\|_\mB^2=\mathbf{v}^{\top}\mB\mathbf{v}$ to denote the matrix $\mB$-norm. Logarithms are natural unless their base is specified. We adopt the standard big $\gO(\,\cdot\,)$ and $\Omega(\,\cdot\,)$ notations, and use $\tilde \gO(\,\cdot\,)$ or $\tilde \Theta(\,\cdot\,)$ to hide the logarithmic factors.

\paragraph{Problem Setup.}
In this paper, we consider second-order convex optimization under the following assumptions. First, the convexity of the function is described as follows.
\begin{asm} \label{asm:convex}
We assume that $f$ is convex, \textit{i.e.},
\[
f(\vx') \ge f(\vx) + \langle \nabla f(\vx), \vx' - \vx \rangle , \quad \forall \vx, \vx' \in \sR^d.
\]
\end{asm}

Second, as a standard assumption for second-order methods \citep[Chapter 4]{nesterov2018lectures}, we assume that the objective function in Problem (\ref{prob:main}) has Lipschitz continuous Hessians.
\begin{asm} \label{asm:Hessian-Lip}
    We assume that $\nabla^2 f(\vx)$ exists and is $L$-Lipschitz continuous ($L>0$), \textit{i.e.},
    \begin{align*}
        \Vert \nabla^2 f(\vx) - \nabla^2 f(\vx') \Vert \le L \Vert \vx-  \vx' \Vert, \quad \forall \vx,\vx' \in \sR^d.
    \end{align*}
\end{asm}

Third, as typically required by accelerated algorithms \citep{nesterov1983method,nesterov2008accelerating,monteiro2013accelerated}, we assume that there exists at least one solution $\vx^*$ for the problem.
\begin{asm} \label{asm:bounded-minimizer}
We assume $f$ has at least one solution $\vx^* \in \arg \min_{\vx \in \sR^d} f(\vx)$.
\end{asm}

Under the above assumptions, we study the complexity of an algorithm to approximately solve Problem~(\ref{prob:main}). Formally, the complexity of an algorithm is defined as the minimal number of iterations to find the following notions of $\epsilon$-solution.

\begin{dfn} \label{dfn:sub-opt-point}
We call $\hat \vx \in \sR^d$  an $\epsilon$-solution to Problem (\ref{prob:main}) if $f(\hat \vx) - f^* \le \epsilon$.
\end{dfn}


\section{Lower Complexity Bound}

As a starting point, it is not hard to observe that, under the cost model (\ref{eq:cost-model}), the number of gradient queries of any optimal method must be $m$ times that of the Hessians. Equivalently,
\begin{align} \label{eq:constraint-ho-fo}
 \# \sO_1(\vx) = m \cdot \# \sO_2(\vx)   
\end{align}
Otherwise, if the algorithm uses fewer gradient evaluations such that $\# \sO_1(\vx) < m \cdot \# \sO_2(\vx)$, then it is strictly dominated by completing the missing gradient queries, which have access to more information without incurring additional costs up to constants.
Symmetrically, if it uses fewer Hessian evaluations such that $\# \sO_2(\vx) <\# \sO_1(\vx) /m$, then it is also inferior to one that fills the remaining Hessian queries.

Therefore, we only need to consider second-order algorithms under constraint (\ref{eq:constraint-ho-fo}), for which we provide lower bounds in the following: We first give lower bounds for the second-order algorithms~\citep{doikov2023second,chayti2023unified} in Section~\ref{subsec:lb-lazy-so}, which will also imply the same lower bounds for general second-order algorithms under constraint~(\ref{eq:constraint-ho-fo}) via a simulation-based reduction in Section~\ref{subsec:lb-general}.

\subsection{Lower Bound for Lazy Second-Order Algorithms} \label{subsec:lb-lazy-so}


We first give lower bounds for the lazy second-order algorithms studied by~\citet{doikov2023second,chayti2023unified}, where the algorithm only queries the Hessian information periodically once every $m$ iterations. Such algorithms are visualized in Figure \ref{fig:lazy-hess}.

\begin{figure}[htbp] 
	\centering 
	\setlength{\arrayrulewidth}{0.5mm}
	\renewcommand{\arraystretch}{1.8}
	\setlength{\tabcolsep}{3pt}
	\arrayrulecolor[HTML]{66BB6A}
	\begin{tabular}{| c | c | c | c | c |}
		\hhline{-----}
		\cellcolor[HTML]{FFFDE7}
		$\boldsymbol{\nabla^2 f(\vx_0)}$ & 
		\multicolumn{3}{c|}{  
			\cellcolor[HTML]{FFFDE7}
			\large
			$\overset{\textbf{reuse Hessian}}{\boldsymbol{\longrightarrow}}$	 
		}  &
		\cellcolor[HTML]{FFFDE7}
		$\!\boldsymbol{\nabla^2 f(\vx_m)}\!$ \\
		\hhline{-----}
		\cellcolor[HTML]{FFFDE7}
		$\boldsymbol{ \nabla f(\vx_0) }$ &
		\cellcolor[HTML]{FFFDE7}
		$\boldsymbol{ \nabla f(\vx_1)} $ & 
		\cellcolor[HTML]{FFFDE7}
		$ \boldsymbol{ \ldots }$ & 
		\cellcolor[HTML]{FFFDE7}
		$\boldsymbol{\nabla f(\vx_{m - 1})}$ &
		\cellcolor[HTML]{FFFDE7}
		$\boldsymbol{\nabla f(\vx_{m})}$ 
		\\[-\arrayrulewidth]
		\hhline{-----}
	\end{tabular}
	\caption{\emph{$m$-lazy second-order algorithms from \citet{doikov2023second}:} compute a new Hessian once per $m$ iterations.}
    \label{fig:lazy-hess}
	\centering
\end{figure}
\begin{dfn}[$m$-lazy second-order algorithm] \label{dfn:alg-class}
An $m$-lazy second-order algorithm \texttt{A} operating on $f : \sR^d \rightarrow \sR$ consists of a sequence of measurable mappings $\{ \texttt{A}_t \}_{t=1}^T$ of the form (\ref{eq:alg-class}), where
the oracle responses satisfy 
\begin{align*}
 \sO_t(\vx) =
 \begin{cases}
 (\nabla f(\vx), \nabla^2 f(\vx)), &   t = m, 2m , \cdots, m \lfloor T/m \rfloor;\\
 \nabla f(\vx), &  \text{otherwise}.
 \end{cases}
\end{align*}
\end{dfn}

 If $m=1$, the above algorithm class reduces to classical second-order algorithms, for which the zero-chain construction \citep{nesterov2018lectures,arjevani2019oracle} already gives a tight lower bound. The zero-chains refer to chain-like functions whose derivatives at a point with $t$ non-zero entries only have non-zero values up to the $(t+1)$-th entry. The optimal second-order zero-chain \citep{arjevani2019oracle} for $m=1$ is the following function:
\begin{align*}
    f_{2T}(\vx) = \lambda \vert a - \langle \vx, \vv_1 \rangle \vert^3 + \lambda \sum_{t=1}^{2T-1} \vert \langle \vx, \vv_{t+1} - \vv_t \rangle \vert^3,
\end{align*}
where $\{\vv_t \}_{t=1}^{2T}
$ is an orthonormal vector set. Although this instance can give the optimal lower bound of $\Omega(\epsilon^{-2/7})$ for classical second-order methods when $m=1$, it can not distinguish gradient and Hessian information and thus fails to reveal the $m$ dependency for the more general cases when $m \ge 1$.

 To overcome this challenge, we introduce a new block zero-chain of the following form:
\begin{align} \label{eq:block-zc}
\begin{split}
    f_{2T}(\vx) =& \lambda \left( (a-\langle \vx, \vv_1 \rangle)^2 + \sum_{t=1}^{m-1} (\langle \vx, \vv_{t+1} - \vv_t \rangle)^2  \right)^{3/2} \\
    &+ \lambda \sum_{s=1}^{\lfloor 2T/m \rfloor-1} \left( \sum_{t=sm}^{(s+1)m-1} (\langle \vx, \vv_{t+1} - \vv_t \rangle)^2  \right)^{3/2} + \lambda \left( \sum_{t=m \lfloor 2T/m \rfloor}^{2T-1} (\langle \vx, \vv_{t+1} - \vv_t \rangle)^2  \right)^{3/2}.
\end{split}
\end{align}
For this instance, each gradient call can discover one coordinate, while each Hessian call can discover all the $m$ coordinates in a block. This allows us to prove lower bounds for any integer $1 \le m \le T$. We formally present this block zero-chain property in the following lemma.

\begin{lem} \label{lem:block-zero-chain}
For any $m$-lazy second-order algorithm \texttt{A} operating on the instance $f_{2T}:\sR^d \rightarrow \sR$ defined in Eq.~(\ref{eq:block-zc}) with no more than $T$ iterations, for $d \ge 3T$, there exists an adversarial strategy to choose the vector set $\{ \vv_t \}_{t=1}^{2T}$, such that
    \begin{align} \label{eq:ortho-vectors}
    \langle \vx_T,\vv_i\rangle =0,
    \qquad
    \forall i=T+1,\ldots,2T .
\end{align}
\end{lem}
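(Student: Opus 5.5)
We will prove the lemma by the standard adversarial ("resisting oracle") argument of \citet{carmon2020lower,arjevani2019oracle}, adapted to blocks. The one new ingredient is a structural fact about the derivatives of the instance (\ref{eq:block-zc}). Write $U_k=\mathrm{span}\{\vv_1,\dots,\vv_k\}$ (with $U_0=\{\vzero\}$) and call $k$ the current \emph{progress}. We first verify the following.

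\emph{Gradient step.} If $\vx\in U_k$, then $\nabla f_{2T}(\vx)\in U_{k+1}$. Every summand of (\ref{eq:block-zc}) has the form $\lambda\|\mA\vx-\vb\|^{3}$, where the rows of $\mA$ are differences $\vv_{t+1}-\vv_t$ (or $-\vv_1$). For $\vx\in U_k$, all inner products $\langle\vx,\vv_{t+1}-\vv_t\rangle$ with $t\ge k+1$ vanish. Hence the gradient $3\lambda\|\mA\vx-\vb\|\mA^\top(\mA\vx-\vb)$ only involves rows with $t\le k$, and these lie in $U_{k+1}$.

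\emph{Hessian step.} If $\vx\in U_k$, then $\nabla^2 f_{2T}(\vx)$ maps into $U_{k'}$ and vanishes on $U_{k'}^\perp$, where $k'$ is at most the right endpoint of the block containing index $k+1$, i.e.\ $k'\le m(\lfloor k/m\rfloor+1)+1\le k+m+1$. The reason is as follows. The Hessian of $\lambda\|\vy\|^3$ is $3\lambda(\|\vy\|\mI+\vy\vy^\top/\|\vy\|)$, which is zero at $\vy=\vzero$. So a block term whose residual is zero at $\vx$ contributes nothing. A block that contains a nonzero residual must contain some index $t\le k$. Only the block containing index $k$ can reach beyond $k$, and that block ends at a boundary $\le k+m+1$. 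Hence the Hessian range is spanned by rows of at most that block, together with earlier ones. At the boundary case $k=sm$, the block $s$ starting at $t=sm$ has only the row $t=sm$ active, so the Hessian term $\vy\vy^\top/\|\vy\|$ reveals just that direction. This gives an even better bound, but we only need $k+m+1$.

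Next comes the adversary. Given these structural facts, we choose the vectors $\vv_t$ lazily. Whenever the algorithm's query $\vx_t$ has been produced, the adversary needs the next revealed vectors to be orthogonal to everything seen so far: all past queries $\vx_1,\dots,\vx_t$ and all previously chosen $\vv$'s. Since $d\ge 3T$, at each time the span of past queries plus chosen vectors has dimension at most $t+2T\le 3T\le d$, so fresh orthonormal directions exist. We place the remaining $\vv$'s in the orthogonal complement at the end, or equivalently choose them at random from the complement, as in \citet{carmon2020lower}. The standard inductive claim is that after $t$ oracle calls the algorithm's information is a measurable function of $\vv_1,\dots,\vv_{k_t}$ only, that $\vx_{t+1}$ lies in a space orthogonal to $\vv_{j}$ for $j>k_t$, and that the oracle response at $\vx_{t+1}$ depends only on $\vv_1,\dots,\vv_{k_{t+1}}$. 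Here $k_{t+1}\le k_t+1$ on gradient-only steps and $k_{t+1}\le k_t+m+1$ on Hessian steps. Components of $\vx$ along unrevealed directions are zero by construction, so the structural facts above apply in the generalized form: for $\vx\perp\vv_j$ for all $j>k$, the derivatives are determined by $\vv_{\le k+1}$ (respectively $\vv_{\le k+m+1}$).

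Counting progress gives the conclusion. In $T$ iterations there are $T-\lfloor T/m\rfloor$ gradient-only calls and $\lfloor T/m\rfloor$ Hessian calls. A first bound on the total progress is $k_T\le (T-\lfloor T/m\rfloor)+\lfloor T/m\rfloor(m+1)$, which is too large. So we must use the sharper block bound. A Hessian call at progress $k$ advances only to the end of the current block, namely to $m(\lfloor k/m\rfloor+1)+1$. Gradient calls in between advance by one each. Over each period of $m$ iterations (one Hessian call and $m-1$ gradient calls), progress therefore increases by at most about $m$ plus lower-order slack, which yields $k_T\le T$. Making this amortization exact, including handling the $+1$ at block boundaries and the final incomplete block, is the main obstacle. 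I would handle it by the invariant $k_t\le t$ for $t$ a multiple of $m$, checked period by period, together with $k_t\le t$ in between. The conclusion is then $\langle\vx_T,\vv_i\rangle=0$ for all $i>T$, which is (\ref{eq:ortho-vectors}).
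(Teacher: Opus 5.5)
There is a genuine gap, and it is exactly at the step you flagged as the ``main obstacle''. The lemma is the \emph{exact} statement $k_T\le T$, and your structural bound cannot deliver it.

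Your Hessian bound $k'\le m(\lfloor k/m\rfloor+1)+1$ is off by one. At a point orthogonal to $\vv_{k+1},\vv_{k+2},\dots$, the last possibly nonzero residual is $\langle\vx,\vv_{k+1}-\vv_k\rangle$. It lies in block $\lfloor k/m\rfloor$, which ends at coordinate $m(\lfloor k/m\rfloor+1)$. This slack is not lower order:
\begin{itemize}
\item With your bound, the first Hessian response already gives $k_m\le m+1$.
\item The next $m-1$ gradient responses push the progress to $2m$, which is a block boundary.
\item There the Hessian legitimately exposes the whole next block, so your bounds allow $k_{2m}\le 3m+1$.
\item Iterating, you only get roughly $k_T\lesssim 2T$.
\end{itemize}
So the invariant $k_t\le t$ cannot be checked ``period by period'' from what you have established.

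The missing observation is \emph{when} the lazy schedule queries Hessians. The $sm$-th response is evaluated at $\vx_{sm-1}$, whose progress is at most $sm-1$, so $\vx_{sm-1}\perp\vv_{sm},\vv_{sm+1},\dots$. At such a point all residuals of block $s$ (edges $sm,\dots,(s+1)m-1$) vanish. The Hessian of $\lambda\|\vu\|^3$, namely $3\lambda(\|\vu\|\mI+\vu\vu^\top/\|\vu\|)$, is zero at $\vu=\vzero$, so that block contributes nothing. The Hessian response therefore depends only on $\vv_1,\dots,\vv_{sm}$: it reveals one new direction, exactly like a gradient.

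With the sharp bound $k'\le m(\lfloor k/m\rfloor+1)$, your induction closes immediately. Assume $k_t\le t$. A gradient response adds at most one. A Hessian response at $t+1=sm$ has $k_t\le sm-1$ and so gives $k_{t+1}\le sm$. This is precisely what the paper verifies. It reveals one $\vv_t$ per iteration and answers with the truncated $f_t$. It then checks that at $\vx_{t-1}$ the gradients always agree with those of $f_{2T}$. The Hessians agree for $t\in\{m,2m,\dots\}$, because the block starting at edge $t$ is identically zero there.

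Your remark on the case $k=sm$ has this backwards. There the residual of block $s$ is $(y_{[sm]},0,\dots,0)\neq\vzero$. The term $\|\vu\|\mI$, not just $\vu\vu^\top/\|\vu\|$, then makes the Hessian depend on all of $\vv_{sm+1},\dots,\vv_{(s+1)m}$. That is the worst case for the adversary, and the lemma holds only because the schedule never queries a Hessian there.
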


It means that only the first $T$ rotated coordinates of the algorithm output $\vx_T$ can be nonzero, and therefore its objective value is given by the truncated function $f_T$. We then use this property to give a lower bound on the suboptimality gap.

\begin{lem} \label{lem:subgap}
If Eq. (\ref{eq:ortho-vectors}) holds, we have
\begin{align} \label{eq:lb-suboptimality-gap}
    f_{2T}(\vx_T) \ge \frac{\lambda a^3}{m^{3/2} \left( \lfloor T/m \rfloor +1  \right)^2}.
\end{align}
\end{lem}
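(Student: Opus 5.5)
Write $y_i = \langle \vx_T, \vv_i\rangle$ for $i = 1,\ldots,2T$. By Eq.~(\ref{eq:ortho-vectors}), $y_i = 0$ for all $i \ge T+1$. The plan is to rewrite $f_{2T}(\vx_T)$ in terms of scalar increments along the chain, discard the nonnegative terms that cannot help, and bound what remains in two steps: Cauchy--Schwarz inside each block, then Hölder across blocks.

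\textbf{Step 1: telescoping increments.} Define
\[
d_0 = a - y_1, \qquad d_t = y_t - y_{t+1} \quad (t = 1,\ldots,T).
\]
Since $y_{T+1} = 0$, the sum telescopes to $\sum_{t=0}^{T} d_t = a$. The increments $d_t$ for $t > T$ vanish, because both endpoints are zero. The only increment involving the boundary is $d_T = y_T$, which is the entry $\langle \vx_T, \vv_{T+1} - \vv_T\rangle$ up to sign; this entry is still present in Eq.~(\ref{eq:block-zc}) since $T \le 2T - 1$.

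\textbf{Step 2: group into blocks.} In Eq.~(\ref{eq:block-zc}), the squared increments are grouped as follows:
\begin{itemize}[leftmargin=*]
\item the first block contains $d_0^2, \ldots, d_{m-1}^2$;
\item block $s \ge 1$ contains $d_{sm}^2, \ldots, d_{(s+1)m-1}^2$;
\item the final block may be partial.
\end{itemize}
Hence the indices $0,\ldots,T$ fall into blocks $s = 0,1,\ldots,\lfloor T/m\rfloor$. These are at most $K := \lfloor T/m\rfloor + 1$ blocks, each containing at most $m$ of the nonzero increments. Let $B_s$ denote the set of indices in block $s$ that are at most $T$. Every block term in $f_{2T}$ is nonnegative, and dropping zero increments does not change a block. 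Therefore
\[
f_{2T}(\vx_T) \ge \lambda \sum_{s=0}^{K-1} \Big(\sum_{t\in B_s} d_t^2\Big)^{3/2}.
\]
The only care needed is to check that the first, middle and last (possibly partial) block terms of Eq.~(\ref{eq:block-zc}) all fit this description, including when $T$ lies in the final partial block.

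\textbf{Step 3: Cauchy--Schwarz within each block.} Set $S_s = \sum_{t\in B_s} d_t$. Since $|B_s| \le m$, Cauchy--Schwarz gives $\sum_{t\in B_s} d_t^2 \ge S_s^2/m$. Consequently each block term is at least $\lambda |S_s|^3 / m^{3/2}$.

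\textbf{Step 4: Hölder across blocks.} Since $\sum_s S_s = a$, Hölder (equivalently, the power-mean inequality) gives
\[
a \le \sum_s |S_s| \le K^{2/3}\Big(\sum_s |S_s|^3\Big)^{1/3},
\quad\text{so}\quad
\sum_s |S_s|^3 \ge \frac{a^3}{K^2}.
\]
Combining Steps 2--4 yields $f_{2T}(\vx_T) \ge \lambda a^3 / \bigl(m^{3/2} K^2\bigr)$, which is Eq.~(\ref{eq:lb-suboptimality-gap}) with $K = \lfloor T/m\rfloor + 1$.

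The only mildly delicate point is the bookkeeping in Step 2: confirming that indices $0$ through $T$ occupy exactly $\lfloor T/m\rfloor + 1$ blocks. Once that is checked, Steps 3 and 4 are routine inequalities.
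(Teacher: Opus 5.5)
Your proposal is correct and follows essentially the same route as the paper: within each of the $\lfloor T/m\rfloor+1$ blocks you bound the sum of squared increments below by $(\text{block sum})^2/m$ (the paper calls this Jensen, you call it Cauchy--Schwarz), then apply the power-mean inequality across blocks, and finish with the triangle inequality on the telescoping sum equal to $a$. The only cosmetic difference is that you work directly with $f_{2T}$ and drop the vanishing increments, whereas the paper first identifies $f_{2T}(\vx_T)$ with the truncated function $f_T$.
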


Finally, it suffices to set the parameters $\lambda$ and $a$ such that the function is convex, has $L$-Lipschitz continuous Hessians, and has a minimizer within distance $R$ of the origin, which yields the final lower bound.

\begin{thm} \label{thm:lower-bound-convex}
Given integers $1\le m\le T$ and any parameters $L>0$ and $D >0$, setting the parameters 
$\lambda =L/96$ and $a = D / \sqrt{2 T}$ for the instance  $f_{2T}:\sR^d \rightarrow \sR$ defined in Eq.~(\ref{eq:block-zc}) gives a function that has $L$-Lipschitz continuous Hessians and satisfies $\Vert \vx^* \Vert \le D$, such that,
any $m$-lazy second-order algorithm \texttt{A} with $T$ iterations,
operating on $f_{2T}$ can not find an $\epsilon$-solution unless $T = \Omega( m^{1/7} (L D^3  / \epsilon)^{2/7} )$.
\end{thm}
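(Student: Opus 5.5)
The proof combines Lemma~\ref{lem:block-zero-chain} and Lemma~\ref{lem:subgap} with three regularity checks on $f_{2T}$.

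\textbf{Step 1: regularity.} Write $f_{2T}(\vx)=\lambda\sum_{k}\|\mA_k\vx-\vb_k\|^3$, where each block term is $\lambda\|\mA_k \vx - \vb_k\|^3$ for a linear map $\mA_k$ collecting the differences $\langle \vx,\vv_{t+1}-\vv_t\rangle$ (and $a-\langle\vx,\vv_1\rangle$ in the first block). Three facts follow.
\begin{itemize}
\item Convexity holds because $\vz\mapsto\|\vz\|^3$ is convex and each block term composes it with an affine map.
\item The map $\vz\mapsto\|\vz\|^3$ has a Hessian that is $6$-Lipschitz; a constant of this order is standard.
\item The blocks use disjoint sets of difference rows, so the stacked operator $\mA=(\mA_k)_k$ is the full difference operator. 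It satisfies $\|\mA\|\le 2$ because $\|\vv_{t+1}-\vv_t\|$-type differences of orthonormal vectors give an operator of norm at most $2$.
\end{itemize}
Next, bound the Hessian variation:
\[\|\nabla^2 f(\vx)-\nabla^2 f(\vx')\|\le 6\lambda\sum_k\|\mA_k\|^2\,\|\mA_k(\vx-\vx')\|.\]
Using $\|\mA_k\|\le 2$ and Cauchy--Schwarz, $\sum_k \|\mA_k \vh\|\le \sqrt{\#\text{blocks}}\,\|\mA\vh\|$. This naive count introduces a block-number factor, so I will work more carefully. Evaluate the Hessian difference as a quadratic form on a unit $\vu$: it is at most $6\lambda\sum_k\|\mA_k\vu\|^2\|\mA_k\vh\|\le 6\lambda \max_k\|\mA_k\vh\|\,\|\mA\vu\|^2\le 6\lambda\cdot 2\|\vh\|\cdot 4$. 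This gives a Lipschitz constant $48\lambda\le L$ (up to a factor $2$ absorbed by $\lambda=L/96$). The main thing to get right is this exact constant, since naive bounds lose block counts.

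\textbf{Step 2: minimizer and distance.} The minimizer is $\vx^*=a\sum_{t=1}^{2T}\vv_t$, which makes every block vanish, so $f^*=0$. Its norm is $\|\vx^*\|=a\sqrt{2T}=D$.

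\textbf{Step 3: the gap.} By Lemma~\ref{lem:block-zero-chain}, with $d\ge 3T$ the output $\vx_T$ satisfies Eq.~(\ref{eq:ortho-vectors}). Lemma~\ref{lem:subgap} then gives
\[
f(\vx_T)-f^*\ge \frac{\lambda a^3}{m^{3/2}(\lfloor T/m\rfloor+1)^2}.
\]
Since $T\ge m$, we have $\lfloor T/m\rfloor+1\le 2T/m$. Substituting $\lambda=L/96$ and $a=D/\sqrt{2T}$, the gap is at least
\[
\frac{L D^3}{96\,(2T)^{3/2}\, m^{3/2}\,(4T^2/m^2)}=c\,\frac{m^{1/2}LD^3}{T^{7/2}}
\]
for an absolute constant $c$.

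\textbf{Step 4: conclude.} If the output is an $\epsilon$-solution, then $c\,m^{1/2}LD^3 T^{-7/2}\le\epsilon$. Rearranging gives $T\ge c'\,m^{1/7}(LD^3/\epsilon)^{2/7}$, which is the claimed bound.
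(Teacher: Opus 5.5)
Your proposal is correct and follows the paper's proof. It uses the same minimizer $\vx^*=a\sum_{t}\vv_t$ with $\|\vx^*\|=a\sqrt{2T}=D$, the same combination of Lemmas~\ref{lem:block-zero-chain} and~\ref{lem:subgap} with $\lfloor T/m\rfloor+1\le 2T/m$, and the same Hessian-Lipschitz ingredients (the $6$-Lipschitz Hessian of $\|\cdot\|^3$ and a difference operator of norm at most $2$). The one small variation is that you use the disjointness of the difference rows to get $\sum_k\|\mA_k\vu\|^2=\|\mA\vu\|^2\le 4$ and hence the constant $48\lambda$ directly, whereas the paper bounds each block by $48\lambda$ and pays a factor $2$ for coordinates shared by adjacent blocks, reaching $96\lambda$; both are at most $L$ when $\lambda=L/96$.
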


Note that an additional lower bound of $\Omega(m)$ is trivial by setting a simple quadratic function \citep[Section 2.1.4]{nesterov2018lectures}. The function can be rescaled with a large constant $\lambda$ such that no algorithm can obtain an approximate minimizer until it makes a Hessian evaluation in the $m$th iteration. Setting a sufficiently large $\lambda$ remains the Hessian Lipschitzness unchanged, such that the worst-case function is always valid. Consequently, we can obtain the final lower bound of $\Omega(m+m^{1/7} \epsilon^{-2/7} )$.

\subsection{Lower Bound for General Second-Order Algorithms} \label{subsec:lb-general}

Actually, our lower bound also holds for more general second-order algorithms that query $n_1$ gradients and $n_2$ Hessians when $n_2 \le n_1$. The formal definition for the algorithm class is as follows.

\begin{dfn}[General second-order algorithm]
\label{dfn:det-alg-class}
An deterministic second-order algorithm with unbalanced Hessian and gradient evaluations \texttt{A} operating on $f : \sR^d \rightarrow \sR$ consists of a sequence of measurable mappings $\{ \texttt{A}_t \}_{t=1}^T$ such that $\texttt{A}_t$ takes in the first $t$ oracle responses to produce the next iterate $\vx_t$,  \textit{i.e.}, \texttt{A} generates a sequence $\{\vx_t\}_{t=1}^{T}$ such that
\begin{align} \label{eq:alg-class}
    \vx_t = \texttt{A}_t ( \sO_1(\vx_0), \sO_2(\vx_1), \cdots, \sO_{t}(\vx_{t-1}) ), \quad t \ge 1; \qquad \vx_0 = \vzero,
\end{align}
where the oracle responses depend on the type $\phi_t \in \{ 1,2\}$:
\[
\sO_t(\vx) =
 \begin{cases}
\nabla^2 f(\vx) , & \text{if} \quad \phi_t = 1; \\
 \nabla f(\vx), & \text{if} \quad \phi_t = 2.
 \end{cases}
\]
We let $n_1$ and $n_2$ be the total number of $\phi_t = 1$ and $\phi_t=2$, respectively.
\end{dfn}



Using a simple simulation argument, we can show that the above algorithm class is equivalent to the $m$-lazy second-order algorithm class for $m = \lceil n_1/n_2 \rceil$ up to constant factors in the total number of iterations. Therefore, our lower bound in Theorem \ref{thm:lower-bound-convex} also holds for this more general algorithm class.

\begin{lem}\label{lem:upper-schedule-simulation}
For $n_2 \in [0,n_1]$, any general second-order algorithm $\texttt{A}$ using $n_1$ gradient queries and $n_2$ Hessian
queries can be simulated by an $m$-lazy algorithm $\texttt{A}'$ in
$n_1+m n_2$ iterations.
\end{lem}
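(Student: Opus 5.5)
The plan is to build $\texttt{A}'$ as an explicit step-by-step emulation of $\texttt{A}$. $\texttt{A}'$ keeps an internal copy of the transcript of $\texttt{A}$ and processes the queries of $\texttt{A}$ one at a time, in their original order. Each query of $\texttt{A}$ is served by a short run of iterations of $\texttt{A}'$. The key invariant, proved by induction on the index $t$ of the query of $\texttt{A}$, is the following. When $\texttt{A}'$ starts serving the $t$-th query of $\texttt{A}$, it already holds exactly the responses $\sO_1(\vx_0),\dots,\sO_t(\vx_{t-1})$ that $\texttt{A}$ would have received. Hence it can compute $\vx_t=\texttt{A}_t(\cdots)$ and the type $\phi_{t+1}$ of the next query by applying the same measurable maps.

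\textbf{Serving a gradient request.} Suppose the next query of $\texttt{A}$ asks for $\nabla f(\vx_t)$. $\texttt{A}'$ spends one iteration querying $\vx_t$. Every iteration of an $m$-lazy algorithm returns at least $\nabla f(\vx_t)$ (a Hessian slot returns the pair), so $\texttt{A}'$ records the gradient and discards anything extra. This costs exactly one iteration.

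\textbf{Serving a Hessian request.} Suppose the next query of $\texttt{A}$ asks for $\nabla^2 f(\vx_t)$. $\texttt{A}'$ repeatedly queries the same point $\vx_t$ until it reaches the next iteration index that is a multiple of $m$. At that iteration the lazy oracle of Definition~\ref{dfn:alg-class} returns $(\nabla f(\vx_t),\nabla^2 f(\vx_t))$, and $\texttt{A}'$ records the Hessian. The padding queries ignore their answers, and the Hessian slot is reached after at most $m$ iterations. So each Hessian request costs at most $m$ iterations.

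Summing these costs gives at most $n_1\cdot 1+n_2\cdot m$ iterations in total, as claimed. If $\texttt{A}'$ finishes early, it pads the remaining iterations by re-querying the last point. It outputs the final iterate of $\texttt{A}$, so its output coincides with that of $\texttt{A}$.

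\textbf{Measurability.} Each map of $\texttt{A}'$ is a composition of the maps $\texttt{A}_t$ with coordinate projections of the response history, and the schedule of which point to query depends only on $(t,\phi_t)$ and the iteration counter. Hence every map of $\texttt{A}'$ is measurable, so $\texttt{A}'$ is a legitimate member of the class in Definition~\ref{dfn:alg-class}.

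We also record a property needed to transfer Theorem~\ref{thm:lower-bound-convex}. Every point queried by $\texttt{A}'$ is an iterate of $\texttt{A}$, and the output is the same. Any adversarial guarantee such as Lemma~\ref{lem:block-zero-chain}, established for $\texttt{A}'$ with $n_1+mn_2$ iterations, therefore applies verbatim to $\texttt{A}$. With $m=\lceil n_1/n_2\rceil$, this count is $\gO(n_1)$.

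The only mildly delicate step is bookkeeping the off-by-one alignment between Hessian slots at indices $m,2m,\dots$ and the initial point $\vx_0$. For example, a Hessian requested at the very first query needs $m$ iterations of waiting rather than fewer. This is absorbed by the per-request bound of $m$ and does not change the total.
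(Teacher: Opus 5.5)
Your proposal is correct and follows the paper's own argument. Each gradient request costs one iteration, with any surplus Hessian at a snapshot discarded. Each Hessian request is served by re-querying the same point until the next snapshot index, which takes at most $m$ iterations, so the total is at most $n_1+mn_2$. Your extra bookkeeping (the induction invariant, measurability, padding, and matching the final output) only makes explicit what the paper's three-line proof leaves implicit. There is one harmless index slip: since the responses $\sO_1(\vx_0),\dots,\sO_t(\vx_{t-1})$ are already in hand, the invariant should refer to serving the $(t+1)$-th query, the one at $\vx_t$.
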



\begin{proof}
When a gradient is needed for $\texttt{A}$, query it at the next iteration and discard the extra Hessian information if that
iteration is a snapshot. When a Hessian at a chosen point is needed for $\texttt{A}$, repeat queries at that point
until the next snapshot and discard the intervening responses. The
waiting time is fewer than $m$ iterations. Therefore, the total number of iterations of $\texttt{A}'$ is at most $n_1+m n_2$.
\end{proof}

\renewcommand{\algorithmiccomment}[1]{\hfill $\triangleright$ #1}
\section{Upper Complexity Bound}\label{sec:upper-complexity}

In this section, we construct an $m$-lazy second-order method in Algorithm \ref{alg:A-LEN} whose complexity nearly matches the preceding lower bound. The main theorem is stated as follows.

\begin{thm}\label{thm:ALEN}
Under Assumptions~\ref{asm:convex},~\ref{asm:Hessian-Lip}, and~\ref{asm:bounded-minimizer},
setting the parameters
\begin{align} \label{eq:param-alen}
    \gamma = L / m^3, \quad T = \gO( ( \gamma D^3 / \epsilon )^{2/7} ), \quad S = \gO(\log m), \quad N = \tilde \gO(m),
\end{align}
then Algorithm \ref{alg:A-LEN}
returns an $\epsilon$-solution in total gradient evaluations of
\begin{equation}\label{eq:upper-main}
\mathcal{O}\!\left(
\left(m+m^{1/7}\left(\frac{L D^3}{\epsilon}\right)^{2/7}\right)
\log^{11/3} m \right),
\end{equation}
where $D = \Vert \vx_0 - \vx^* \Vert$ is the initial distance to an arbitrary minimizer $\vx^*$.
\end{thm}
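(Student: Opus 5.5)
We will prove Theorem~\ref{thm:ALEN} in two layers: an outer accelerated proximal-point (A-NPE/optimal-MS-type) loop that uses a \emph{cubic} proximal step with regularization $\gamma = L/m^3$, and an inner sub-solver that approximately solves each cubic proximal subproblem using one fresh Hessian per outer iteration plus $\tilde\gO(m)$ gradients. This follows the reduction of \citet{chen2026faster}; the new ingredient is a faster inner solver.

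\textbf{Outer loop.} At outer step $k$ we take an extrapolation point $\vy_k$ and seek $\vx_{k+1}$ approximately minimizing
\[
f(\vx) + \tfrac{\gamma}{6}\|\vx-\vy_k\|^3 .
\]
We then invoke the standard large-step acceleration analysis for an approximate high-order proximal oracle (as in \citet{carmon2022optimal,kovalev2022first}, with the segment search over the extrapolation coefficient handled by $S=\gO(\log m)$ bisection steps). This gives $f(\vx_T)-f^*\le \gO(\gamma D^3/T^{7/2})$, so $T=\gO((\gamma D^3/\epsilon)^{2/7})$ outer iterations suffice. The inexactness criterion we will need is the usual one: the gradient of the proximal objective at $\vx_{k+1}$ is at most a constant fraction of $\gamma\|\vx_{k+1}-\vy_k\|^2$.

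Substituting $\gamma=L/m^3$ gives $T=\gO(m^{-6/7}(LD^3/\epsilon)^{2/7})$. If the inner cost is $\tilde\gO(m)$ gradients per outer step, with one Hessian amortized, the total is
\[
\tilde\gO\!\left(m + m\,T\right)=\tilde\gO\!\left(m+m^{1/7}(LD^3/\epsilon)^{2/7}\right).
\]
The additive $m$ accounts for the first Hessian snapshot and the case $T<1$.

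\textbf{Inner solver (main obstacle).} We must solve the cubic proximal subproblem to the required accuracy with $N=\tilde\gO(m)$ gradients, using the Hessian $\mB=\nabla^2 f(\vy_k)$ queried once. The plan has four steps.

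\emph{Step 1: reduce the curvature mismatch.} The subproblem solution lies within radius $r\approx\|\vx_{k+1}-\vy_k\|$. On that ball, Assumption~\ref{asm:Hessian-Lip} gives $\|\nabla^2 f(\vx)-\mB\|\le Lr$, while the cubic term contributes curvature of order $\gamma r = Lr/m^3$. Preconditioning with $\mB+\gamma r\mI$ therefore leaves a relative condition number of order $(Lr)/(\gamma r)=m^3$.

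\emph{Step 2: accelerate.} Preconditioned accelerated gradient descent then needs $\sqrt{m^3}=m^{3/2}$ iterations, which is too many. To reach $\tilde\gO(m)$ we instead exploit the fact that the mismatch $f-\text{(quadratic model)}$ has third-order structure. We apply AGD not to the subproblem directly but to an \emph{extrapolated objective}: a cubic-regularized model whose regularizer is enlarged to $\sim L r/m^{2}$ on an appropriately sized segment. The radius and extrapolation weight are chosen by segment search, contributing the logarithmic factors. Balancing the condition number ($m^2$) gives $m$ AGD steps per solve, each costing one gradient.

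\emph{Step 3: close the loop.} We embed this inner solve in a short accelerated outer correction of $\gO(\log m)$ rounds. This converts the approximate solution of the extrapolated problem into one satisfying the MS inexactness criterion for the original $\gamma$.

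\emph{Step 4: verify the bookkeeping.} We check that the extra logarithms accumulate to at most $\log^{11/3} m$.

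The main difficulty is Step 2: establishing that the extrapolated preconditioned problem has condition number $\tilde\gO(m^2)$ uniformly along the segment search, and that its approximate minimizer meets the outer inexactness criterion. I would first try to prove a local strong-convexity/smoothness sandwich for the extrapolated objective in the $(\mB+\mu\mI)$-norm with $\mu\asymp Lr/m^2$, using $\|\nabla^2 f(\vx)-\mB\|\le L\|\vx-\vy_k\|$, and then apply the standard AGD rate.
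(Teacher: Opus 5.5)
\textbf{Where the proposal breaks.} Your outer layer and final bookkeeping are right. Under Condition~\ref{cond:cubic-proxi} the search-free MS loop needs $T=\gO((\gamma D^3/\epsilon)^{2/7})$ iterations. With $\gamma=L/m^3$, a proximal oracle costing $\tilde{\gO}(m)$ gradients and $\gO(\log m)$ Hessians then gives the stated bound through Lemma~\ref{lem:upper-schedule-simulation}. Two minor misreadings: the outer loop needs no search, and $S=\gO(\log m)$ counts gap-reducing epochs inside \texttt{Cubic-Proximal}, each taking a fresh Hessian at its current iterate.

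The genuine gap is Steps~2--3, which carry the whole content of the theorem. Your target numbers agree with the paper's (an effective ratio $\kappa^{2/3}\approx m^2$, hence $\kappa^{1/3}\approx m$ AGD steps), but neither reading of your mechanism reaches them.

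\emph{Reading 1: a two-sided sandwich.} A sandwich with ratio $\tilde{\gO}(m^2)$ is not available. In one dimension take $f(x)=\frac L6\max\{0,x\}^3+4\gamma r^2x$ near the origin (modified far away to have a minimizer) and proximal center $r$. The cubic-proximal minimizer is $-r$, where the subproblem's curvature is $4\gamma r$, against $Lr$ at the center. So any region containing both points has ratio of order $\kappa$, whatever the preconditioner. Extrapolations $\sum_i\lambda^iF(r+c^i(v-r))$ of the subproblem objective $F$ do not help: near $-r$ all extrapolated points lie in $\{x\le 0\}$, so the curvature there stays $\gO(\gamma r)$. Strongly convex AGD is then back at $m^{3/2}$ gradients per call.

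\emph{Reading 2: enlarge the regularizer, then correct.} Suppose you genuinely change the problem by enlarging the cubic regularizer to $\gamma'\approx m\gamma$. Then Step~3 fails. Recovering an MS-approximate solution for $\gamma$ from $\gamma'$-proximal steps is an accelerated cubic-proximal-point method on a function that is only $\gamma$-uniformly convex. Its rate $\gO(\gamma'\|\vz_0-\vz^*\|^3/k^{7/2})$, with $\|\vz_0-\vz^*\|^3\lesssim\Delta/\gamma$, needs about $(\gamma'/\gamma)^{2/7}=m^{2/7}$ rounds per constant-factor gap reduction, not $\gO(\log m)$. The $\epsilon$-term becomes $m^{3/7}\epsilon^{-2/7}$, no better than running the outer loop with $\gamma=L/m^2$ and an $\gO(\sqrt\kappa)$ sub-solver.

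\textbf{What the paper does instead.} It never changes $\gamma$ and never needs a lower curvature bound. The only growth property used is $\gamma$-uniform convexity, $f_{\vx,\gamma}-f^*_{\vx,\gamma}\ge\frac\gamma6\|\cdot-\vy^*_{\vx,\gamma}\|^3$. Each epoch restarts at the current iterate $\vy$ with gap at most $\Delta$, queries $\nabla^2 f_{\vx,\gamma}(\vy)$, and runs \emph{convex} preconditioned AGD. Its error $\|\vy-\bar\vv\|_\mB^2/(2A_N)+\zeta$ only requires the one-sided bound $\nabla^2 g\preceq\mB$. Two ingredients make $\|\vy-\bar\vv\|_\mB^2=\tilde{\gO}(\kappa^{2/3}\Delta)$:
\begin{enumerate}
\item Lemma~\ref{lem:upper-snapshot-distance}, $\|\vy-\vy^*_{\vx,\gamma}\|^2_{\nabla^2 f_{\vx,\gamma}(\vy)}\le6(1+\kappa^{2/3})\Delta$. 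This distance-to-gap inequality, not a sandwich, is where Hessian Lipschitzness enters; in the example above it is $\gO(\Delta)$ at the center. It is also why the Hessian is re-queried at a point whose gap is the current $\Delta$.
\item A small isotropic shift in $\mB$. The shift must dominate the curvature wherever the gradient steps go. The naive shift $HR$, with $R=(6\Delta/\gamma)^{1/3}$, contributes $HR^3\approx\kappa\Delta$ and returns you to $\sqrt\kappa$. The extrapolated objective $g(\vv)=\sum_{i=0}^J\lambda^if_{\vx,\gamma}(\vy+c^i(\vv-\vy))$ instead satisfies $\nabla^2g\preceq\mB$ on its sublevel set $\{g\le g(\vy)\}$ with shift $100(J+1)^{4/3}(H^2\Delta)^{1/3}=\tilde{\gO}(HR\kappa^{-1/3})$ (Lemma~\ref{lem:upper-extrapolated-curvature}).
\end{enumerate}

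No correction back to $\gamma$ is needed. With $W=\sum_i\lambda^i$ one has $g-Wf^*_{\vx,\gamma}\ge f_{\vx,\gamma}-f^*_{\vx,\gamma}$. The comparator $\bar\vv=\vy+c^{-J}(\vy^*_{\vx,\gamma}-\vy)$ has $g(\bar\vv)-Wf^*_{\vx,\gamma}<0.643\Delta$ (Lemma~\ref{lem:comparator}). Hence $N=\gO((J+1)^{2/3}\kappa^{1/3})$ steps on $g$ shrink the \emph{original} gap by $3/4$. CRN steps at both ends of \texttt{Cubic-Proximal} convert between gap and the MS criterion (Lemma~\ref{lem:reduce-gap}).

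Finally, the curvature bound holds only on that sublevel set, so the segment search runs over $[\vx_k,\vv_k]$. It keeps every gradient point inside the set while ensuring $\langle\nabla g(\vy_k),\vy_k-\vv_k\rangle\le\zeta$ in the potential argument; it is not a search over parameters.
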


In a nutshell, our method combines optimal A-NPE (also known as search-free MS acceleration) by \citet{carmon2022optimal} 
with a new implementation of its inner cubic proximal oracle. The main ingredient is a preconditioned AGD with segment search that reduces the suboptimality gap of the subproblem by a constant factor using one Hessian and
$\widetilde{\mathcal{O}}(\kappa^{1/3})$ gradients, where $\kappa$ is the
second-order condition number of the subproblem. We obtain this
rate by minimizing a weighted sum of extrapolated objective.
The extrapolation controls curvature on the sublevel set visited by the
method, and a finite gradient-only line search keeps the descent iterates
inside that set. The detailed introduction to our algorithm is as follows.

\begin{algorithm*}[t]  
\caption{\texttt{Search-Free-MS-Acceleration}$(f, \vx_0, \gamma, T, S,N) $
}\label{alg:A-LEN}
\begin{algorithmic}[1] 
\STATE $\vv_0 = \vx_0$, $\bar \vx_0 = \vx_0$, $A_0 = 0$ \\
\STATE $\vy_0 = \texttt{Cubic-Proximal}(f,\vx_0,\gamma,S,N)$
\STATE If $\vy_0=\vx_0$, return $\vx_0$.
\STATE Set $\lambda_0' = \lambda_0= 1/(\gamma \Vert \vy_0 - \vx_0 \Vert) $. \\
\FOR{$t = 0,\cdots, T-1$}  
\STATE \quad $a_{t+1}' =   \lambda_{t}'(1 + \sqrt{1 + 4  A_t/ \lambda_{t}'}) / 2$, ~ $A_{t+1}' = A_t + a_{t+1}'$,~ $\bar \vx_t = (A_t \vx_t + a_{t+1}' \vv_t)/{A_{t+1}'} $ \label{line:momentum} \\
\STATE \quad \textbf{if} $t > 0 $ \textbf{then} 
\STATE \quad \quad $\vy_t = \texttt{Cubic-Proximal}(f,\bar{\vx}_t,\gamma,S,N)$
\STATE \quad \quad If $\vy_t=\bar{\vx}_t$, return $\bar{\vx}_t$.
\STATE \quad \quad Set $\lambda_t = 1/(\gamma \Vert \vy_t - \bar \vx_t \Vert)$. \\
\STATE \quad \textbf{end if} \\
\STATE \quad  \textbf{if} $\lambda_{t} \ge  \lambda_{t}'$ \textbf{then} \label{line:begin-seacg-lambda} \\
\STATE \quad \quad $a_{t+1} = a_{t+1}' $, $A_{t+1} = A_{t+1}'$, ~ $\vx_{t+1} = \vy_{t} $, ~ $\lambda_{t+1}' = 2 \lambda_{t}' $\\
\STATE \quad \textbf{else} \\
\STATE \quad \quad $\beta_{t} = \lambda_{t}/\lambda_{t}'$,~ $a_{t+1} = \beta_{t} a_{t+1}'$, ~ $A_{t+1} = A_t + a_{t+1}$ \\
\STATE \quad \quad $ \vx_{t+1} = ((1- \beta_{t}) A_t \vx_t + \beta_{t} A_{t+1}' \vy_{t}) / A_{t+1}$,~ $\lambda_{t+1}' = \lambda_{t}'/2$ \\
\STATE \quad \textbf{end if} \label{line:end-search-lambda} \\
\STATE \quad $\vv_{t+1} = \vv_t - a_{t+1} \nabla f(\vy_{t})$ \label{line:extra-ALEN} \\
\ENDFOR \\
\RETURN $\vx_T$ 
\end{algorithmic}
\end{algorithm*}

\subsection{Reduction to a Cubic Proximal Oracle}


In the first step, we leverage the result in \citet{carmon2022optimal} to reduce the task of designing an efficient second-order algorithm with optimal dependence on $\epsilon$ for convex optimization to the task of finding an approximate stationary point of the uniformly convex proximal functions:
\begin{equation}\label{eq:upper-cubic-objective}
f_{\vx,\gamma}(\vy)=f(\vy)+\frac{\gamma}{3}\|\vy-\vx\|^3,
\end{equation}
where $\vx \in \sR^d$ is the proximal center and $\gamma>0$ is the regularization. This function has a unique minimizer, we denote it as $\vy_{\vx,\gamma}^* := \arg \min_{\vy \in \sR^d} f_{\vx, \gamma}(\vy)$. We also denote its optimal value by $f_{\vx,\gamma}^* := f_{\vx,\gamma}(\vy_{\vx,\gamma}^*)$. It can be observed that, the optimal second-order method by \citet{carmon2022optimal} essentially only relies on the following approximate cubic proximal oracle to achieve the optimal $\epsilon^{-2/7}$ dependency.

\begin{cond} \label{cond:cubic-proxi}
The cubic proximal oracle $\vy = \texttt{Cubic-Proximal}(f,\vx,\gamma,S)$ always returns an output $\vy \in \sR^d$ satisfies the following Monteiro--Svaiter (MS) \citep{monteiro2012iteration,monteiro2013accelerated} condition
\begin{equation}\label{eq:upper-ms-condition}
\|\nabla f_{\vx,\gamma}(\vy)\|
\le \frac{\gamma}{32} \|\vy-\vx\|^2.
\end{equation}
\end{cond}

\begin{lem}[{\citet[Lemma 5.1]{chen2026faster}}] \label{lem:A-NPE}
Under Assumption \ref{asm:convex} and \ref{asm:bounded-minimizer}, if the \texttt{Cubic-Proximal} oracle always return an output satisfying Condition \ref{cond:cubic-proxi}, then running Algorithm \ref{alg:A-LEN} guarantees $f(\vx_T) - f^* \le \epsilon$ in 
\begin{equation}\label{eq:upper-outer-count}
T = \mathcal{O}\!\left(
\left(\frac{\gamma D^3}{\epsilon}\right)^{2/7}\right)
\end{equation}
iterations, where 
$D = \Vert \vx_0 - \vx^* \Vert$ is the initial distance to an arbitrary minimizer $\vx^*$.
\end{lem}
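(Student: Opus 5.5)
The statement is Lemma~\ref{lem:A-NPE}, taken from \citet[Lemma 5.1]{chen2026faster}. I would prove it by following the search-free MS-acceleration analysis of \citet{carmon2022optimal}, with their exact cubic step replaced by any output satisfying Condition~\ref{cond:cubic-proxi}.

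\textbf{Step 1: per-iteration MS inequality.} Write $\vy=\vy_t$, $\vx=\bar\vx_t$ and $\lambda_t = 1/(\gamma\|\vy-\vx\|)$. Condition~\ref{cond:cubic-proxi} gives
\[
\|\nabla f(\vy) + \gamma\|\vy-\vx\|(\vy-\vx)\|\le \tfrac{\gamma}{32}\|\vy-\vx\|^2 .
\]
Rewriting this in terms of $\lambda_t$ yields
\[
\|\lambda_t\nabla f(\vy)+(\vy-\vx)\|\le \tfrac1{32}\|\vy-\vx\| .
\]
This is exactly the relative-error MS condition with parameter $\sigma=1/32<1$. The early-return branch is handled separately: if $\vy_t=\bar\vx_t$, the condition forces $\nabla f(\bar\vx_t)=0$, so the returned point is optimal.

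\textbf{Step 2: potential argument.} I would use the Lyapunov function
\[
\Phi_t = A_t\,(f(\vx_t)-f^*) + \tfrac12\|\vv_t-\vx^*\|^2 .
\]
I then verify $\Phi_{t+1}\le\Phi_t$, with an extra decrease term $\tfrac{1-\sigma^2}{2}\sum_t \tfrac{A'_{t+1}}{\lambda_t}\|\vy_t-\bar\vx_t\|^2$ over accepted steps. The verification splits into the two branches of lines~\ref{line:begin-seacg-lambda}--\ref{line:end-search-lambda}.

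In the branch $\lambda_t\ge\lambda'_t$, the usual estimate-sequence computation applies. It uses convexity (Assumption~\ref{asm:convex}), the identity $a_{t+1}'^2=\lambda'_tA'_{t+1}$, and the inequality from Step 1.

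In the branch $\lambda_t<\lambda'_t$, the point $\vx_{t+1}$ is a convex combination of $\vx_t$ and $\vy_t$, and the step is scaled by $\beta_t$. The same inequality then follows from convexity of $f$, since the effective $a_{t+1}$ satisfies $a_{t+1}^2\le\lambda_tA_{t+1}$. This is Carmon et al.'s argument verbatim.

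\textbf{Step 3: growth of $A_T$.} This is the main obstacle. First, I need $\|\vy_t-\bar\vx_t\|$ to be controlled through $\lambda_t^{-1}$ so that the cubic link $\lambda_t\|\vy_t-\bar\vx_t\|=1/\gamma$ converts the summed decrease into a bound of the form
\[
\sum \frac{A'_{t+1}}{\lambda_t^{3}}\gamma^{-2}\lesssim D^2 .
\]
Second, I need the doubling/halving schedule for $\lambda'_t$ to give amortized control. Each halving step costs at most a constant factor, and the number of halving steps is bounded by the number of doubling steps plus $\log(\lambda'_0/\lambda'_T)$. I would invoke Carmon et al.'s accounting lemma, which gives $A_T\gtrsim \gamma^{-1}T^{7/2}/D$.

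\textbf{Conclusion.} Combining the two bounds gives
\[
f(\vx_T)-f^*\le \frac{D^2}{2A_T}=\gO\!\left(\frac{\gamma D^3}{T^{7/2}}\right),
\]
and solving for $T$ yields \eqref{eq:upper-outer-count}.
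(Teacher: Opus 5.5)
Your Step~1 is correct, and it is the only ingredient the paper's citation needs. The paper gives no proof of its own: it cites \citet[Lemma 5.1]{chen2026faster}, i.e.\ Carmon et al.'s Theorem~1 applied to an MS oracle with $\sigma=1/32$ and $\lambda_t\|\vy_t-\bar\vx_t\|=1/\gamma$. If you invoke that theorem as a black box right after Step~1, your argument coincides with the paper's. Your own reconstruction of Steps~2--3, however, has a genuine gap: the decrease term you keep is not the one that drives the growth of $A_T$.

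On an accepted step, $A$ grows through the \emph{guess}: $\sqrt{A_{t+1}}-\sqrt{A_t}\ge\frac12\sqrt{\lambda'_t}$. Your decrease term $\frac{(1-\sigma^2)A'_{t+1}}{2\gamma^2\lambda_t^3}$ instead involves the actual $\lambda_t\ge\lambda'_t$. So it gives no lower bound on $\lambda'_t$, and it is negligible when $\lambda_t\gg\lambda'_t$.

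For example, alternate accepted steps at $\lambda'_t=\Lambda$ with $\lambda_t\gg\Lambda$ and rejected steps at $\lambda'_t=2\Lambda$ with $\lambda_t=\Lambda$. Nothing you record rules this out, and your halving/doubling count is trivially balanced. Yet it only gives $A_T=\Theta(T^2\Lambda)$ with $\Lambda$ arbitrary.

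What you discard is the decrease on the \emph{rejected} steps. In that branch you should use convexity for $\vx_{t+1}$ together with the identity $a_{t+1}^2=\beta_t\lambda_tA'_{t+1}$. The inequality $a_{t+1}^2\le\lambda_tA_{t+1}$ that you cite is too weak, since $\bar\vx_t\neq(A_t\vx_t+a_{t+1}\vv_t)/A_{t+1}$ there. The correct computation gives
\[
\Phi_t-\Phi_{t+1}\ge\frac{(1-\sigma^2)A'_{t+1}}{2\lambda'_t}\|\vy_t-\bar\vx_t\|^2=\frac{(1-\sigma^2)A'_{t+1}}{2\gamma^2\lambda'_t\lambda_t^2}\ge\frac{(1-\sigma^2)A'_{t+1}}{2\gamma^2(\lambda'_t)^3},
\]
where the last step uses $\lambda_t<\lambda'_t$. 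This is what lower-bounds the guesses.

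The accounting then runs as follows:
\begin{itemize}
\item Charge each halving at level $\lambda'$ to an earlier doubling from level $\lambda'/2$, which contributes $\frac12\sqrt{\lambda'/2}$ to $\sqrt{A}$.
\item Apply reverse H\"older to obtain $A_T\gtrsim T^{7/2}/(\gamma D)$. In the example above, this forces $\Lambda\gtrsim T^{3/2}/(\gamma D)$.
\item Handle separately the unmatched doublings, where $\lambda'$ and hence $A$ grow geometrically.
\item Handle separately the initial halvings below $\lambda'_0=\lambda_0$. Their number is controlled by the same rejected-step decrease combined with $A_T\ge A_1=\lambda'_0$.
\end{itemize}
Your count via $\log(\lambda'_0/\lambda'_T)$ also needs this last argument, since you give no lower bound on $\lambda'_T$.
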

The above lemma is essentially from \citet[Theorem 1]{carmon2022optimal} using their search-free Monteiro--Svaiter acceleration algorithm \citep{monteiro2013accelerated,kovalev2022first,carmon2022optimal} whose procedure is presented in Algorithm \ref{alg:A-LEN}. We therefore focus
on designing a fast sub-solver using only logarithmically many Hessian queries to implement an approximate cubic proximal oracle that satisfies Condition~\ref{cond:cubic-proxi} efficiently, which can lead to a lower setting of $\gamma$ and finally result in a faster algorithm. To achieve this goal, we recall the properties of the proximal function as follows.

\begin{lem}[{\citet[Lemma 5.2]{chen2026faster} and \citet[Theorem 4.2.1]{nesterov2018lectures}} ] \label{lem:sub-prob-grad-dom}
Under Assumptions \ref{asm:convex} and~\ref{asm:Hessian-Lip}, the function $f_{\vx,\gamma}(\vy)$ defined in Eq. (\ref{eq:upper-cubic-objective}) satisfies:
\begin{enumerate}
    \item $f_{\vx,\gamma}(\vy)$ has $(L+2 \gamma)$-Lipschitz continuous Hessians.
    \item For any $\vy \in \sR^d$,
    \begin{align}
        \Vert \nabla f_{\vx, \gamma}(\vy) \Vert^{3/2} \ge \frac{3}{2} \sqrt{\frac{\gamma}{2}} (f_{\vx, \gamma}(\vy) - f_{\vx,\gamma}^*)
        \geq \frac{1}{2}\left( \frac{\gamma}{2} \Vert \vy - \vy_{\vx,\gamma}^* \Vert^2 \right)^{3/2}.\label{eq:cubic-grad-uniform}
    \end{align}
\end{enumerate}
\end{lem}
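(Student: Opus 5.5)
Both parts follow from the structure of the cubic regularizer $d(\vy)=\frac{1}{3}\|\vy-\vx\|^3$, since $f_{\vx,\gamma}=f+\gamma d$ and $f$ is convex with $L$-Lipschitz Hessian.

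\emph{Part 1.} Set $\vu=\vy-\vx$. A direct computation gives
\[
\nabla d(\vy)=\|\vu\|\vu,
\qquad
\nabla^2 d(\vy)=\|\vu\|\mI+\vu\vu^\top/\|\vu\|
\]
for $\vu\neq\vzero$, and $\nabla^2 d(\vx)=\mO$. I will show that $\vu\mapsto \|\vu\|\mI+\vu\vu^\top/\|\vu\|$ is $2$-Lipschitz in spectral norm.
\begin{itemize}
\item The term $\|\vu\|\mI$ is $1$-Lipschitz by the triangle inequality.
\item The map $\vu\mapsto \vu\vu^\top/\|\vu\|=\|\vu\|\,\vw\vw^\top$, with $\vw=\vu/\|\vu\|$, is also $1$-Lipschitz. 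This is the standard estimate in \citet[Section 4.2]{nesterov2018lectures}. It can be checked by restricting to the two-dimensional span of $\vu,\vu'$ and computing the eigenvalues of the difference, or by bounding the derivative of the map along the segment $[\vu,\vu']$.
\end{itemize}
Adding the two terms, $\gamma d$ has a $2\gamma$-Lipschitz Hessian, and the triangle inequality then gives the constant $L+2\gamma$ for $f_{\vx,\gamma}$. This $1$-Lipschitz bound for $\vu\vu^\top/\|\vu\|$ is the only place where some care is needed; the rest is bookkeeping.

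\emph{Part 2, first inequality.} I will use the uniform convexity of $d$ of degree $3$ \citep[Lemma 4.2.3]{nesterov2018lectures}:
\[
d(\vz)\ge d(\vy)+\langle\nabla d(\vy),\vz-\vy\rangle+\tfrac{1}{6}\|\vz-\vy\|^3 .
\]
This is the degree-$p$ constant $\frac1p(\frac12)^{p-2}$ evaluated at $p=3$. Combining it with the convexity of $f$ (Assumption~\ref{asm:convex}) and writing $\vg=\nabla f_{\vx,\gamma}(\vy)$, we get for all $\vz$
\[
f_{\vx,\gamma}(\vz)\ge f_{\vx,\gamma}(\vy)+\langle \vg,\vz-\vy\rangle+\tfrac{\gamma}{6}\|\vz-\vy\|^3 .
\]
Taking the minimum over $\vz$ on both sides, the right-hand side reduces to minimizing $-\|\vg\|r+\frac{\gamma}{6}r^3$ over $r\ge0$. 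The minimizer is $r=\sqrt{2\|\vg\|/\gamma}$, with value $-\frac{2}{3}\sqrt{2/\gamma}\,\|\vg\|^{3/2}$. Hence
\[
f_{\vx,\gamma}(\vy)-f^*_{\vx,\gamma}\le \tfrac{2}{3}\sqrt{2/\gamma}\,\|\vg\|^{3/2},
\]
which rearranges to $\|\vg\|^{3/2}\ge \frac32\sqrt{\gamma/2}\,\bigl(f_{\vx,\gamma}(\vy)-f_{\vx,\gamma}^*\bigr)$.

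\emph{Part 2, second inequality.} Apply the same lower bound with base point $\vy^*_{\vx,\gamma}$, where the gradient vanishes, and evaluate it at $\vz=\vy$:
\[
f_{\vx,\gamma}(\vy)-f^*_{\vx,\gamma}\ge \tfrac{\gamma}{6}\|\vy-\vy^*_{\vx,\gamma}\|^3 .
\]
Multiplying by $\frac32\sqrt{\gamma/2}$ gives $\frac{\gamma^{3/2}}{4\sqrt2}\|\vy-\vy^*_{\vx,\gamma}\|^3$. This equals $\frac12\bigl(\frac{\gamma}{2}\|\vy-\vy^*_{\vx,\gamma}\|^2\bigr)^{3/2}$, which is exactly the claimed bound.
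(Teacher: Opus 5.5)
\textbf{Part 1 has a genuine gap.} The bullet claiming that $\vu\mapsto\vu\vu^\top/\|\vu\|$ is $1$-Lipschitz in spectral norm is false. Take $\vu=(1,0)^\top$ and $\vu'=(\sqrt3,1)^\top$, so that $\|\vu'\|=2$ and
\[
\frac{\vu'(\vu')^\top}{\|\vu'\|}-\frac{\vu\vu^\top}{\|\vu\|}=\frac12\begin{pmatrix}1&\sqrt3\\ \sqrt3&1\end{pmatrix}.
\]
This matrix has spectral norm $(1+\sqrt3)/2\approx1.366$, while $\|\vu-\vu'\|=\sqrt{5-2\sqrt3}\approx1.239$.

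The infinitesimal picture is the same. At $\vu\neq\vzero$, write $\vw=\vu/\|\vu\|$ and take a unit direction $\vh=\alpha\vw+\beta\vw_\perp$ with $\vw_\perp\perp\vw$. The derivative of the map is then $\alpha\vw\vw^\top+\beta(\vw\vw_\perp^\top+\vw_\perp\vw^\top)$. Its spectral norm is $\bigl(|\alpha|+\sqrt{4-3\alpha^2}\bigr)/2$, which reaches $2/\sqrt3$ at $\alpha^2=1/3$. So the two-dimensional eigenvalue check you suggest would refute the bullet rather than confirm it. Splitting $\nabla^2 d$ into $\|\vu\|\mI$ plus the rank-one part and adding Lipschitz constants gives at best $L+(1+2/\sqrt3)\gamma$, not $L+2\gamma$. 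Nesterov's estimate concerns the full Hessian of $\frac13\|\cdot\|^3$, not the rank-one piece.

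\textbf{How to fix it.} The two pieces are worst in different directions, so you must bound their sum jointly. For $\Phi(\vu)=\|\vu\|\mI+\vu\vu^\top/\|\vu\|$ and $\vu\neq\vzero$, the same notation gives
\[
D\Phi(\vu)[\vh]=\alpha\mI+\alpha\vw\vw^\top+\beta\bigl(\vw\vw_\perp^\top+\vw_\perp\vw^\top\bigr).
\]
Its eigenvalues are $\alpha$ and $\bigl(3\alpha\pm\sqrt{4-3\alpha^2}\bigr)/2$. The largest modulus is $\bigl(3|\alpha|+\sqrt{4-3\alpha^2}\bigr)/2$, and this is at most $2$, because $4-3\alpha^2\le(4-3|\alpha|)^2$ is equivalent to $12(1-|\alpha|)^2\ge0$.

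Integrating along $[\vu,\vu']$ then gives $\|\Phi(\vu)-\Phi(\vu')\|\le2\|\vu-\vu'\|$ whenever the segment avoids the origin. If the segment passes through the origin, use $\|\Phi(\vu)\|=2\|\vu\|$ together with $\|\vu\|+\|\vu'\|=\|\vu-\vu'\|$. Alternatively, cite the combined bound directly, as the paper does in the proof of Theorem~\ref{thm:lower-bound-convex}, where $\|\cdot\|^3$ is said to have $6$-Lipschitz Hessians. With that replacement, Part 1 yields $L+2\gamma$.

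\textbf{Part 2 is correct.} The uniform-convexity constant $\tfrac16$ is right, the lower model's minimum value is $-\tfrac23\sqrt{2/\gamma}\,\|\nabla f_{\vx,\gamma}(\vy)\|^{3/2}$, and the final identity $\tfrac32\sqrt{\gamma/2}\cdot\tfrac{\gamma}{6}=\tfrac{\gamma^{3/2}}{4\sqrt2}$ checks out. This is the standard argument behind the results the paper cites, so the whole proof goes through once Part 1 is repaired.
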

In light of the above lemma, we use \(H:= L+2 \gamma\) and \( \kappa:= H / \gamma \ge 2\) to denote the Hessian Lipschitz constant and the second-order condition number \citep[Eq. (4.2.22)]{nesterov2018lectures} of $f_{\vx,\gamma}$, respectively.

\begin{algorithm}[t]
\caption{\texttt{Cubic-Proximal}$(f,\vx, \gamma, S,N)$}
\label{alg:MS-solver}
\begin{algorithmic}[1]
\STATE Set $H=L+2\gamma$ and $\kappa=H/\gamma$.
\STATE Set $\vz_0=\mathcal{C}_H(\vx)$ and $\vy_0=\mathcal{C}_H(\vz_0)$.
\STATE Set $\Delta_0=\frac{2\sqrt2}{3\sqrt\gamma}
\|\nabla f_{\vx,\gamma}(\vy_0)\|^{3/2}$.
\FOR{$s=0,\ldots,S-1$}
\STATE $\vy_{s+1}=\texttt{PAGD-Seg}(f_{\vx,\gamma}, \vy_s, \Delta_s, H , \gamma,N)$
\STATE $\Delta_{s+1} = 3\Delta_s/4$
\ENDFOR
\STATE \textbf{return} $\hat \vy=\mathcal{C}_H(\vy_S)$.
\end{algorithmic}
\end{algorithm}

\subsection{Reduction From Making Gradient Small to Reducing the Gap} 
Condition \ref{cond:cubic-proxi} requires a subroutine to efficiently find an approximate stationary point of $f_{\vx,\gamma}$. However, the known guarantee for convex optimization typically focuses on reducing the suboptimality gap. To address this mismatch in the convergence criteria of the subroutine, we follow \citet{chen2026faster} to take several additional cubic regularized Newton (CRN) step \citep{nesterov2006cubic,nesterov2018lectures} to reduce the implementation of Condition \ref{cond:cubic-proxi} to reducing the suboptimality gap. The formal definition of the CRN step and its key properties are stated as follows:
\begin{equation}\label{eq:upper-cubic-step}
\mathcal{C}_H(\vz)=\vz+\mathop{\rm argmin}_{\mathbf{s} \in \mathbb{R}^d}
\langle\nabla f_{\vx,\gamma}(\vz),\mathbf{s}\rangle
+\frac12\mathbf{s}^{\top}\nabla^2f_{\vx,\gamma}(\vz)\mathbf{s}
+\frac H3\|\mathbf{s}\|^3.
\end{equation}

\begin{lem}[{\citet[(4.2.29) and (4.2.36)]{nesterov2018lectures}}]\label{lem:crn}
Under Assumptions~\ref{asm:convex} and~\ref{asm:Hessian-Lip}, set
$H=L+2\gamma$. For $\vy=\mathcal{C}_H(\vz)$, we have the following:
\begin{enumerate}
    \item $\|\nabla f_{\vx,\gamma}(\vy)\|^{3/2}
    \le 3\sqrt H\bigl(f_{\vx,\gamma}(\vz)-f_{\vx,\gamma}^*\bigr)$.
    \item $f_{\vx,\gamma}(\vy)-f_{\vx,\gamma}^*
    \le H\|\vz-\vy_{\vx,\gamma}^*\|^3/3$.
\end{enumerate}
\end{lem}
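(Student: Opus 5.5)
The plan is to apply the standard one-step analysis of the cubic regularized Newton step to the function $f_{\vx,\gamma}$. By Lemma~\ref{lem:sub-prob-grad-dom}, $f_{\vx,\gamma}$ is convex and has $H$-Lipschitz Hessians with $H=L+2\gamma$, so the regularization parameter in $\mathcal{C}_H$ matches the Hessian Lipschitz constant. Throughout, write $\mathbf{g}=\nabla f_{\vx,\gamma}(\vz)$, $\mB=\nabla^2 f_{\vx,\gamma}(\vz)\succeq\mO_d$, let $\mathbf{s}$ be the minimizer in (\ref{eq:upper-cubic-step}), and set $r=\|\mathbf{s}\|$, so that $\vy=\vz+\mathbf{s}$. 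The two inputs are the first-order optimality condition $\mathbf{g}+\mB\mathbf{s}+Hr\,\mathbf{s}=\vzero$ and the two-sided Taylor bounds implied by the Hessian Lipschitz property:
\begin{align*}
\bigl|f_{\vx,\gamma}(\vz+\mathbf{u})-f_{\vx,\gamma}(\vz)-\langle\mathbf{g},\mathbf{u}\rangle-\tfrac12\mathbf{u}^\top\mB\mathbf{u}\bigr|\le \tfrac H6\|\mathbf{u}\|^3,\qquad \|\nabla f_{\vx,\gamma}(\vz+\mathbf{u})-\mathbf{g}-\mB\mathbf{u}\|\le\tfrac H2\|\mathbf{u}\|^2 .
\end{align*}

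For item 1, I first bound the gradient at $\vy$. The optimality condition gives $\mathbf{g}+\mB\mathbf{s}=-Hr\,\mathbf{s}$, so the gradient Taylor bound yields $\|\nabla f_{\vx,\gamma}(\vy)\|\le \tfrac H2 r^2+Hr^2=\tfrac32Hr^2$. Next I bound the decrease. Taking the inner product of the optimality condition with $\mathbf{s}$ gives $\langle\mathbf{g},\mathbf{s}\rangle=-\mathbf{s}^\top\mB\mathbf{s}-Hr^3$. Substituting into the upper Taylor bound and using $\mB\succeq\mO_d$ gives $f_{\vx,\gamma}(\vz)-f_{\vx,\gamma}(\vy)\ge \tfrac12\mathbf{s}^\top\mB\mathbf{s}+\tfrac56Hr^3\ge\tfrac56Hr^3$. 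Combining the two bounds gives $\|\nabla f_{\vx,\gamma}(\vy)\|^{3/2}\le(3/2)^{3/2}H^{3/2}r^3\le \tfrac65(3/2)^{3/2}\sqrt H\,(f_{\vx,\gamma}(\vz)-f_{\vx,\gamma}(\vy))$. The constant $\tfrac65(3/2)^{3/2}\approx2.2$ is below $3$, and $f_{\vx,\gamma}(\vy)\ge f_{\vx,\gamma}^*$, so item 1 follows.

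For item 2, the plan is a comparison argument. The upper Taylor bound shows that $f_{\vx,\gamma}(\vy)$ is at most $f_{\vx,\gamma}(\vz)$ plus the minimal model value. I then compare that minimal model value with the model evaluated at a test step. The natural test step is $\mathbf{u}=\alpha(\vy_{\vx,\gamma}^*-\vz)$ for $\alpha\in[0,1]$. The lower Taylor bound controls the quadratic part of the model at $\mathbf{u}$ by $f_{\vx,\gamma}(\vz+\mathbf{u})+\tfrac H6\|\mathbf{u}\|^3$. Convexity along the segment then gives $f_{\vx,\gamma}(\vz+\mathbf{u})\le(1-\alpha)f_{\vx,\gamma}(\vz)+\alpha f_{\vx,\gamma}^*$.

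I expect the main obstacle to be the exact constant $H/3$ in item 2. The crude version of this argument, using $\alpha=1$ and bounding $f_{\vx,\gamma}(\vy)$ directly by the minimal model value, gives $\tfrac H6+\tfrac H3=\tfrac H2$ in front of $\|\vz-\vy_{\vx,\gamma}^*\|^3$. To sharpen it, I would keep the unused slack, namely the term $-\tfrac H6 r^3$ in the upper bound at $\vy$ and the nonnegative term $\tfrac12\mathbf{s}^\top\mB\mathbf{s}$, and optimize over $\alpha$. Failing that, I would follow Nesterov's bookkeeping in \citet[(4.2.36)]{nesterov2018lectures} for convex functions with regularization parameter $M\ge L$. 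If only the bound $H/2$ can be obtained, then item 2 changes by an absolute constant. This would only rescale the constants in the later use of the lemma, since Algorithm~\ref{alg:MS-solver} uses $\mathcal{C}_H$ only to convert distance or gap bounds into gradient bounds up to absolute constants.
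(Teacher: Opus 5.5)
Your item 1 is correct. From $\nabla f_{\vx,\gamma}(\vz)+\nabla^2 f_{\vx,\gamma}(\vz)\mathbf{s}=-Hr\mathbf{s}$ you get $\|\nabla f_{\vx,\gamma}(\vy)\|\le\frac32Hr^2$ and $f_{\vx,\gamma}(\vz)-f_{\vx,\gamma}(\vy)\ge\frac56Hr^3$, hence the constant $\frac65(3/2)^{3/2}\approx2.2<3$. This is a self-contained replacement for the paper's citation.

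The gap is in item 2. You only obtain $H/2$, and none of the routes you suggest reaches $H/3$:
\begin{itemize}
\item Taking $\mathbf{u}=\alpha(\vy_{\vx,\gamma}^*-\vz)$ with $\alpha<1$ leaves the term $(1-\alpha)(f_{\vx,\gamma}(\vz)-f_{\vx,\gamma}^*)$. This is not bounded by any multiple of $H\|\vz-\vy_{\vx,\gamma}^*\|^3$ (let $\nabla^2 f_{\vx,\gamma}(\vz)$ be large), so in the worst case you are forced back to $\alpha=1$.
\item In the $\alpha=1$ comparison chain, the slack $-\frac H6r^3$ does not close the gap. Even adding the term $-\frac H6\|\mathbf{u}-\mathbf{s}\|^3$ from the uniform convexity of the cubic model, it only improves $\frac H2$ to $\frac{11}{24}H$ in the worst case $\mathbf{s}=\mathbf{u}/2$.
\item Deferring to Nesterov does not help either. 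The standard comparison bound $f(T_M(\vz))\le\min_{\vw}\{f(\vw)+\frac{L+M}{6}\|\vw-\vz\|^3\}$ is for the regularizer $\frac M6\|\mathbf{s}\|^3$. The paper's $\frac H3\|\mathbf{s}\|^3$ in (\ref{eq:upper-cubic-step}) corresponds to $M=2H$, which again gives $H/2$. The constants $3\sqrt H$ and $H/3$ in the statement are exactly what these standard estimates give for $M=H$.
\end{itemize}
Your remark that $H/2$ would suffice downstream is true: in Lemma~\ref{lem:reduce-gap} it only turns $\frac{2\sqrt2}{3}$ into $\sqrt2\le 50$. But that changes the statement rather than proving it.

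The missing idea is to go through the gradient at $\vy$ and convexity, instead of a function-value comparison. Let $G=\|\nabla f_{\vx,\gamma}(\vy)\|$ and $D=\|\vz-\vy_{\vx,\gamma}^*\|$. Your optimality condition shows that $\nabla f_{\vx,\gamma}(\vy)+Hr\mathbf{s}$ is exactly the Taylor residual $\nabla f_{\vx,\gamma}(\vy)-\nabla f_{\vx,\gamma}(\vz)-\nabla^2 f_{\vx,\gamma}(\vz)\mathbf{s}$, so its norm is at most $\frac H2r^2$. Squaring this and using $\|\mathbf{s}\|=r$ gives
\[
\langle\nabla f_{\vx,\gamma}(\vy),\vz-\vy\rangle\ge\frac{G^2}{2Hr}+\frac38Hr^3 .
\]
Splitting $\vy-\vy_{\vx,\gamma}^*=(\vy-\vz)+(\vz-\vy_{\vx,\gamma}^*)$, convexity then yields
\[
f_{\vx,\gamma}(\vy)-f_{\vx,\gamma}^*\le\langle\nabla f_{\vx,\gamma}(\vy),\vy-\vy_{\vx,\gamma}^*\rangle\le GD-\frac{G^2}{2Hr}-\frac38Hr^3\le\frac{HrD^2}{2}-\frac38Hr^3\le\frac29HD^3 .
\]
Here you maximize first over $G$ (at $G=HrD$) and then over $r$ (at $r=2D/3$). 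The case $r=0$ is trivial, since then $\nabla f_{\vx,\gamma}(\vz)=\vzero$ and $\vy=\vz$ is a minimizer. Since $\frac29<\frac13$, this proves item 2 as stated, with room to spare.
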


We implement Condition \ref{cond:cubic-proxi} by Algorithm \ref{alg:MS-solver}. This algorithm consists of $S$ epochs, and the CRN steps are only conducted at the first and last epochs, while the remaining epochs apply a subroutine using lazy second-order oracles to reduce the suboptimality gap. We now show that Condition \ref{cond:cubic-proxi} can be established by Algorithm \ref{alg:MS-solver} in $S = \gO(\log \kappa)$ iterations once the \texttt{PAGD-Seg} subroutine satisfied the following condition.


\begin{cond} \label{cond:gapreduce}
Under Assumptions \ref{asm:convex} and \ref{asm:Hessian-Lip}, for any $\Delta_s>0$ and $\vy_s$ satisfying
$f_{\vx,\gamma}(\vy_s)-f_{\vx,\gamma}^*\le\Delta_s$,
the subprocedure $\vy_{s+1}=\texttt{PAGD-Seg}
(f_{\vx,\gamma},\vy_s,\Delta_s,H,\gamma,N)$ returns a point satisfying
\[
f_{\vx,\gamma}(\vy_{s+1})-f_{\vx,\gamma}^*
\le\Delta_{s+1}:=\frac34\Delta_s.
\]
\end{cond}

\begin{lem}\label{lem:reduce-gap}
Under Assumptions~\ref{asm:convex} and~\ref{asm:Hessian-Lip}, let
$\gamma>0$, $H=L+2\gamma$, and $\kappa=H/\gamma$.
If the \texttt{PAGD-Seg} subprocedure always ensures Condition~\ref{cond:gapreduce}, then
Algorithm~\ref{alg:MS-solver}, with
\[
S=\left\lceil\frac{\log(50\cdot384^{3/2}\kappa^2)}{\log(4/3)}\right\rceil,
\]
returns $\hat\vy$ satisfying Condition~\ref{cond:cubic-proxi}, \textit{i.e.}, $ \| \nabla f_{\vx,\gamma}(\hat \vy)\| \le \gamma\| \hat \vy - \vx \|^2/32 $.
\end{lem}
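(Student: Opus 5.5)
The plan is to track the suboptimality gap through the $S$ epochs by induction and then sandwich it between the two CRN steps. The first CRN steps relate the initial gap to $\|\vx-\vy_{\vx,\gamma}^*\|$. The final CRN step turns the small gap into a small gradient. Uniform convexity (Lemma~\ref{lem:sub-prob-grad-dom}) then converts $\|\vx-\vy_{\vx,\gamma}^*\|$ into $\|\hat\vy-\vx\|$. Throughout I write $\vy^*=\vy_{\vx,\gamma}^*$ and $f_{\vx,\gamma}^*$ for the optimal value.

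\emph{Step 1 (initialization and induction).} By the first inequality in Eq.~(\ref{eq:cubic-grad-uniform}), $f_{\vx,\gamma}(\vy_0)-f_{\vx,\gamma}^*\le \frac{2\sqrt2}{3\sqrt\gamma}\|\nabla f_{\vx,\gamma}(\vy_0)\|^{3/2}=\Delta_0$. Hence the hypothesis of Condition~\ref{cond:gapreduce} holds at $s=0$. Applying Condition~\ref{cond:gapreduce} inductively gives
\[
f_{\vx,\gamma}(\vy_S)-f_{\vx,\gamma}^*\le (3/4)^S\Delta_0 .
\]

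\emph{Step 2 (bounding $\Delta_0$ by the distance to $\vy^*$).} Apply Lemma~\ref{lem:crn} twice. Item 1 with $\vy_0=\mathcal{C}_H(\vz_0)$ gives $\|\nabla f_{\vx,\gamma}(\vy_0)\|^{3/2}\le 3\sqrt H\,(f_{\vx,\gamma}(\vz_0)-f_{\vx,\gamma}^*)$. Item 2 with $\vz_0=\mathcal{C}_H(\vx)$ gives $f_{\vx,\gamma}(\vz_0)-f_{\vx,\gamma}^*\le H\|\vx-\vy^*\|^3/3$. Combining the two,
\[
\Delta_0\le c_1\,\kappa^{3/2}\gamma\,\|\vx-\vy^*\|^3
\]
for an absolute constant $c_1$. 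Applying item 1 of Lemma~\ref{lem:crn} once more to $\hat\vy=\mathcal{C}_H(\vy_S)$ yields
\[
\|\nabla f_{\vx,\gamma}(\hat\vy)\|^{3/2}\le 3\sqrt H\,(3/4)^S\Delta_0\le c_2\,\kappa^2(3/4)^S\,\gamma^{3/2}\|\vx-\vy^*\|^3 .
\]
Therefore $\|\nabla f_{\vx,\gamma}(\hat\vy)\|\le \delta\,\gamma\|\vx-\vy^*\|^2$ with $\delta:=(c_2\kappa^2(3/4)^S)^{2/3}$. The prescribed $S$ gives $(3/4)^S\le 1/(50\cdot 384^{3/2}\kappa^2)$, which makes $\delta$ an explicitly small absolute constant.

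\emph{Step 3 (replacing $\|\vx-\vy^*\|$ by $\|\hat\vy-\vx\|$).} This is the main obstacle, since Condition~\ref{cond:cubic-proxi} is stated in terms of the output's displacement rather than the distance to $\vy^*$. I would use the second inequality in Eq.~(\ref{eq:cubic-grad-uniform}) at $\hat\vy$, which gives $\|\hat\vy-\vy^*\|^2\le c_3\|\nabla f_{\vx,\gamma}(\hat\vy)\|/\gamma$. Hence $\|\hat\vy-\vy^*\|\le\sqrt{c_3\delta}\,\|\vx-\vy^*\|\le\frac12\|\vx-\vy^*\|$ once $\delta$ is small enough. The triangle inequality then gives $\|\vx-\vy^*\|\le 2\|\hat\vy-\vx\|$, and so
\[
\|\nabla f_{\vx,\gamma}(\hat\vy)\|\le 4\delta\,\gamma\|\hat\vy-\vx\|^2\le \frac{\gamma}{32}\|\hat\vy-\vx\|^2 .
\]
The remaining work is constant bookkeeping: checking that the constant $50\cdot384^{3/2}$ in $S$ suffices for both requirements $c_3\delta\le 1/4$ and $4\delta\le 1/32$. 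In the degenerate case $\vx=\vy^*$, the bounds above force $\nabla f_{\vx,\gamma}(\hat\vy)=\vzero$, so the condition holds trivially.
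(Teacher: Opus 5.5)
Your proposal is correct and follows essentially the same route as the paper's proof. Both bound $\Delta_0$ by $\mathcal{O}(\gamma\kappa^{3/2}\|\vx-\vy_{\vx,\gamma}^*\|^3)$ via the two initial CRN steps, contract by $(3/4)^S$, turn the gap into a gradient bound with the final CRN step, and then use the second inequality of Lemma~\ref{lem:sub-prob-grad-dom}(2) plus the triangle inequality to swap $\|\vx-\vy_{\vx,\gamma}^*\|$ for $\|\hat\vy-\vx\|$. The only difference is cosmetic: the paper shows $\|\hat\vy-\vy_{\vx,\gamma}^*\|\le r_*/4$ and ends with the constant $1/72$, whereas your factor $1/2$ gives $4\delta$ with $\delta=(2\sqrt2/(50\cdot384^{3/2}))^{2/3}\approx 4\cdot10^{-4}$, which also easily meets both requirements.
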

Therefore, applying the above reductions, it remains to specify the subroutine that can satisfy Condition~\ref{cond:gapreduce}. Naive AGD~\citep{nesterov1983method} is not sufficient since it only has a sublinear convergence rate. To obtain a fast method, we develop the preconditioned accelerated gradient descent with segment search 
(\texttt{PAGD-Seg)} to achieve an exponential convergence rate. At a high level, we use the Hessian snapshot \citep{doikov2023second,chayti2023unified,adil2025balancing} as a preconditioner to turn the objective strongly convex locally. To better understand the \texttt{PAGD-Seg} algorithm, we introduce some additional properties of the proximal function.


\subsection{Curvature Control by an Extrapolated Objective}

 To facilitate the analysis, we introduce a technical lemma that controls the distance from a snapshot point $\vy$ to the minimizer $\vy_{\vx,\gamma}^*$ in the geometry relative to its Hessian by the suboptimality gap $f_{\vx,\gamma}(\vy) - f_{\vx,\gamma}^*$.

\begin{lem}\label{lem:upper-snapshot-distance}
Under Assumptions~\ref{asm:convex} and \ref{asm:Hessian-Lip}, for any $\vy \in \sR^d$,
\begin{equation}\label{eq:upper-snapshot-distance}
(\vy - \vy_{\vx,\gamma}^*)^{\top} \nabla^2 f_{\vx,\gamma}(\vy) (\vy - \vy_{\vx,\gamma}^*) 
\le 6(1+\kappa^{2/3})  (f_{\vx,\gamma}(\vy) - f_{\vx,\gamma}^*).
\end{equation}
\end{lem}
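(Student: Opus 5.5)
Write $F:=f_{\vx,\gamma}$, $\Delta:=F(\vy)-f_{\vx,\gamma}^*$, $\vh:=\vy-\vy_{\vx,\gamma}^*$, $r:=\|\vh\|$ and $Q:=\vh^\top\nabla^2F(\vy)\vh$. The plan is to compare $Q$ with Hessians along the segment from $\vy_{\vx,\gamma}^*$ to $\vy$, using item 1 of Lemma~\ref{lem:sub-prob-grad-dom} (Hessian Lipschitz constant $H$). Then $r$ is eliminated with item 2 of the same lemma.

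\textbf{Step 1 (a bound on $r$).} Chaining the two inequalities in \eqref{eq:cubic-grad-uniform} gives $\tfrac32\sqrt{\gamma/2}\,\Delta\ge\tfrac12(\gamma/2)^{3/2}r^3$, that is,
\[
r^3\le 6\Delta/\gamma .
\]

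\textbf{Step 2 (Taylor expansion at the minimizer).} Since $\nabla F(\vy_{\vx,\gamma}^*)=\vzero$,
\[
\Delta=\int_0^1(1-t)\,\vh^\top\nabla^2F(\vy_{\vx,\gamma}^*+t\vh)\,\vh\,dt .
\]
Two facts control the integrand. First, it is nonnegative because $F$ is convex. Second, Hessian Lipschitzness gives
\[
\vh^\top\nabla^2F(\vy_{\vx,\gamma}^*+t\vh)\vh\ \ge\ Q-H(1-t)r^3 .
\]

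\textbf{Step 3 (localize near $t=1$ and balance).} For $\delta\in(0,1]$, drop the integral over $[0,1-\delta]$, which is nonnegative. On $[1-\delta,1]$ the integrand is at least $Q-H\delta r^3$, so
\[
\Delta\ \ge\ \tfrac{\delta^2}{2}\bigl(Q-H\delta r^3\bigr).
\]
Split into two cases.
\begin{itemize}
\item \emph{Case $Q\le 2Hr^3$.} Choose $\delta=Q/(2Hr^3)\le 1$. This gives $\Delta\ge \delta^2Q/4=Q^3/(16H^2r^6)$. Inserting Step 1 yields $Q^3\le 16\cdot 36\,\kappa^2\Delta^3$, hence $Q=\gO(\kappa^{2/3})\Delta$.
\item \emph{Case $Q>2Hr^3$.} Take $\delta=1$ (equivalently, the full-interval bound). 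This gives $\Delta\ge \tfrac12 Q-\tfrac{H}{3}r^3\ge \tfrac12 Q-\tfrac16 Q$, so $Q\le 3\Delta$.
\end{itemize}
Combining the cases gives $Q\le c(1+\kappa^{2/3})\Delta$ for an absolute constant $c$, which is the claimed form.

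\textbf{Main obstacle.} The exponent $2/3$ is forced: the naive full-interval bound $Q\le 2\Delta+\tfrac{2H}{3}r^3$ only gives a factor linear in $\kappa$. The improvement comes entirely from the $\delta$-localization, whose weight $\delta^2$ against a penalty $\delta$ produces the cube-root balance. The remaining difficulty is matching the exact constant $6$. The crude version above gives $576^{1/3}\approx 8.3$. To tighten it I would do two things. First, keep the full weight $\int_{1-\delta}^1(1-t)(Q-H(1-t)r^3)\,dt=\tfrac{\delta^2}{2}Q-\tfrac{\delta^3}{3}Hr^3$ and optimize over $\delta$ exactly. Second, exploit the extra curvature of the cubic regularizer, whose Hessian is at least $\gamma\|\vz-\vx\|\mI$, to sharpen Step 1 if needed.
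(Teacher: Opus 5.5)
Your proposal follows the paper's proof: the paper lower-bounds the same Taylor integral, written in the reversed variable $t\mapsto 1-t$, by $\int_0^1 t\max\{0,Q-Htr^3\}\,dt$, and this is exactly your first refinement with $\delta=\min\{1,Q/(Hr^3)\}$. That choice gives $\Delta\ge Q^3/(6H^2r^6)$ when $Q<Hr^3$ and $\Delta\ge Q/2-Hr^3/3\ge Q/6$ otherwise, so together with $Hr^3\le 6\kappa\Delta$ you get the exact constant $6$; your cruder choice $\delta=Q/(2Hr^3)$ is what costs the factor $576^{1/3}$, and the second refinement via the regularizer's curvature is not needed.
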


\begin{proof}
Let $\vd  = \vy - \vy_{\vx,\gamma}^*$. The claim is immediate when $\vd=0$.
Otherwise, by stationarity at $\vy_{\vx,\gamma}^*$ and Hessian $H$-Lipschitzness  we obtain
\begin{align*}
    f_{\vx,\gamma}(\vy) - f_{\vx,\gamma}^*  &=\int_0^1 t \vd^\top \nabla^2 f_{\vx,\gamma}(\vy - t \vd) \vd {\rm d} t\\
    &\geq \int_0^1 t \max\{0,\vd^\top (\nabla^2 f_{\vx,\gamma}(\vy) - Ht\Vert\vd\Vert\mI_d) \vd\} {\rm d} t\\
    &=
    \begin{cases} 
        \dfrac{1}{2}\vd^\top \nabla^2 f_{\vx,\gamma}(\vy) \vd - \dfrac{H}{3}\Vert\vd\Vert^3, &\vd^\top \nabla^2 f_{\vx,\gamma}(\vy) \vd\geq H\Vert\vd\Vert^3;\\[2mm]
        \dfrac{1}{6}(\vd^\top \nabla^2 f_{\vx,\gamma}(\vy) \vd)^3/(H\Vert\vd\Vert^3)^2, &\vd^\top \nabla^2 f_{\vx,\gamma}(\vy) \vd < H\Vert\vd\Vert^3.
    \end{cases}
\end{align*}
If $\vd^\top \nabla^2 f_{\vx,\gamma}(\vy) \vd\geq H\Vert\vd\Vert^3$, we have
\begin{align*}
    f_{\vx,\gamma}(\vy) - f_{\vx,\gamma}^*  &\geq \frac{1}{2}\vd^\top \nabla^2 f_{\vx,\gamma}(\vy) \vd - \frac{H}{3}\Vert\vd\Vert^3 \geq \frac{1}{6}\vd^\top \nabla^2 f_{\vx,\gamma}(\vy) \vd.
\end{align*}
If $\vd^\top \nabla^2 f_{\vx,\gamma}(\vy) \vd < H\Vert\vd\Vert^3$, by item 2 of Lemma \ref{lem:sub-prob-grad-dom}, we have $ H \| \vd \|^3 \le 6\kappa(f_{\vx,\gamma}(\vy) - f_{\vx,\gamma}^*) $. Therefore, 
\begin{align*}
    f_{\vx,\gamma}(\vy) - f_{\vx,\gamma}^*  &\geq \frac{1}{6}(\vd^\top \nabla^2 f_{\vx,\gamma}(\vy) \vd)^3/(H\Vert\vd\Vert^3)^2\\
    &\geq \frac{1}{6}(\vd^\top \nabla^2 f_{\vx,\gamma}(\vy) \vd)^3/(6\kappa(f_{\vx,\gamma}(\vy) - f_{\vx,\gamma}^*))^2,
\end{align*}
so $f_{\vx,\gamma}(\vy) - f_{\vx,\gamma}^* \geq (1/{6\kappa^{2/3}})\vd^\top \nabla^2 f_{\vx,\gamma}(\vy) \vd$. In both cases, we have inequality (\ref{eq:upper-snapshot-distance}) holds.
\end{proof}
Intuitively, the above lemma suggests that an accelerated gradient descent (AGD) \citep{nesterov1983method,nesterov2018lectures} with the preconditioner given by $\nabla^2 f_{\vx,\gamma}(\vy)$ solves the cubic proximal subproblem in the number of iterations of 
\[
\left(\frac{(\vy - \vy_{\vx,\gamma}^*)^{\top} \nabla^2 f_{\vx,\gamma}(\vy) (\vy - \vy_{\vx,\gamma}^*) }{ f_{\vx,\gamma}(\vy) - f_{\vx,\gamma}^*}\right)^{1/2} = \gO( \kappa^{1/3}).
\]
However, directly applying (preconditioned) AGD to the original proximal function may be unstable, since the momentum may lead to the arrival of some point that has unbounded Hessians. For the purpose of stability, we introduce a novel extrapolated objective whose Hessian is stable in a sublevel set. Let
\begin{equation}\label{eq:upper-extrapolation-parameters}
\lambda=\frac1{64},\quad
J=\left\lceil\frac{\log\kappa}{3\log64}\right\rceil\geq1,
\quad c=1+\frac1J,
\end{equation}
Our extrapolated objective $g$ is defined as follows:
\begin{equation}\label{eq:upper-extrapolated-objective}
g(\vv)=\sum_{i=0}^J\lambda^if_{\vx,\gamma}(\vv_i),\quad {\rm where} \quad \vv_i=\vy+c^i(\vv-\vy).
\end{equation}
The following lemma gives the core property of the extrapolated objective, which bounds its Hessian $\nabla^2 g(\vv)$ in the sublevel set
using the Hessian of the original objective at the snapshot point $\nabla^2 f_{\vx,\gamma} (\vy)$.


\begin{lem}\label{lem:upper-extrapolated-curvature}
Under Assumptions \ref{asm:convex} and \ref{asm:Hessian-Lip}, for $\Delta>0 $ satisfies $ f_{\vx,\gamma}(\vy)-f_{\vx,\gamma}^* \le \Delta$, let
\begin{equation}\label{eq:upper-metric}
\mB=2 (J+1) \nabla^2f_{\vx,\gamma}(\vy)+100 (J+1)^{4/3}(H^2 \Delta)^{1/3} \mI_d.
\end{equation}
Then
\begin{equation}\label{eq:upper-sublevel-smoothness}
\nabla^2 g(\vv)\preceq \mB, \qquad \forall \vv \in \{ \vv' \in \sR^d: g(\vv') \le g(\vy) \}.
\end{equation}
\end{lem}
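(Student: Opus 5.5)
The plan is to compute $\nabla^2 g$ explicitly and compare every term with the snapshot Hessian $\nabla^2 f_{\vx,\gamma}(\vy)$ via Hessian Lipschitzness. Since $\vv_i=\vy+c^i(\vv-\vy)$, we have
\[
\nabla^2 g(\vv)=\sum_{i=0}^J \lambda^i c^{2i}\,\nabla^2 f_{\vx,\gamma}(\vv_i).
\]
By item 1 of Lemma~\ref{lem:sub-prob-grad-dom}, with $r:=\|\vv-\vy\|$,
\[
\nabla^2 f_{\vx,\gamma}(\vv_i)\preceq \nabla^2 f_{\vx,\gamma}(\vy)+Hc^i r\,\mI_d .
\]
Because $c=1+1/J$, we have $c^{i}\le c^J\le e$, and $\lambda=1/64$, so $\sum_i\lambda^i c^{2i}\le 2$ and $\sum_i\lambda^ic^{3i}\le 2$. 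This already gives
\[
\nabla^2 g(\vv)\preceq 2\nabla^2 f_{\vx,\gamma}(\vy)+2Hr\,\mI_d ,
\]
which is dominated by the $2(J+1)\nabla^2 f_{\vx,\gamma}(\vy)$ part of $\mB$. The lemma therefore reduces to a radius bound on the sublevel set:
\[
Hr\lesssim (J+1)^{4/3}(H^2\Delta)^{1/3},\qquad\text{i.e.}\qquad r\lesssim (J+1)^{4/3}(\Delta/H)^{1/3}.
\]

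Next I would extract quantitative information from $g(\vv)\le g(\vy)$. Writing $g(\vv)-\sum_i\lambda^i f^*_{\vx,\gamma}=\sum_i\lambda^i(f_{\vx,\gamma}(\vv_i)-f^*_{\vx,\gamma})$, using that each summand is nonnegative, and using $f_{\vx,\gamma}(\vy)-f_{\vx,\gamma}^*\le\Delta$, we get
\[
f_{\vx,\gamma}(\vv_i)-f^*_{\vx,\gamma}\le 2\lambda^{-i}\Delta,\qquad i=0,\ldots,J.
\]
Combined with the uniform convexity in Lemma~\ref{lem:sub-prob-grad-dom}, which gives $f_{\vx,\gamma}(\vz)-f^*_{\vx,\gamma}\ge \gamma\|\vz-\vy^*_{\vx,\gamma}\|^3/6$, this localizes every $\vv_i$. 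However, used alone this only yields $r\lesssim(\Delta\lambda^{-J}/\gamma)^{1/3}$, which carries an extra power of $\kappa$. The choice $\lambda^{-J}\approx\kappa^{1/3}$ suggests the real argument must exploit $H$ rather than $\gamma$.

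For the sharp radius bound I would argue by contradiction along the ray $t\mapsto \phi(t)=f_{\vx,\gamma}(\vy+t(\vv-\vy)/r)$, which is convex with $|\phi'''|\le H$. Suppose $r\ge C(J+1)^{4/3}(\Delta/H)^{1/3}$. The consecutive points $\vv_i,\vv_{i+1}$ are separated by $c^ir/J$, while the allowed gap grows only by the factor $\lambda^{-1}=64$ per step. I would imitate the two-case argument of Lemma~\ref{lem:upper-snapshot-distance} on each segment $[\vv_i,\vv_{i+1}]$: either the curvature at $\vv_i$ along the ray is large, so that the quadratic term forces a gap increase exceeding $2\lambda^{-(i+1)}\Delta$; or it is small, in which case $\phi''$ is controlled on the segment by $H$ times the length. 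Telescoping over $i$ and using that $\phi$ is nondecreasing beyond $\vy$ (otherwise $\vy$ would not be nearly optimal) should pin $r$ to the scale $(J\cdot(\text{segment count}))^{1/3}(\Delta/H)^{1/3}$, with the extra $J$ factors producing $(J+1)^{4/3}$.

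I expect this last step to be the main obstacle, namely getting $r$ at scale $(\Delta/H)^{1/3}$ with only polylogarithmic loss rather than $(\Delta/\gamma)^{1/3}$. If the segment argument fails, I would fall back to a direction-wise bound. This means bounding $\vu^\top\nabla^2 f_{\vx,\gamma}(\vv_i)\vu$ by $\vu^\top\nabla^2f_{\vx,\gamma}(\vy)\vu$ plus $H$ times the distance between $\vv_i$ and the nearest lower-level point $\vv_{i-1}$. That distance is only $c^{i-1}r/J$. It would be combined with the gap bounds above to absorb the remaining mass into the $100(J+1)^{4/3}(H^2\Delta)^{1/3}\mI_d$ term of $\mB$.
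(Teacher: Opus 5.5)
There is a genuine gap: the radius bound you reduce the lemma to is false, so neither the segment argument nor the fallback can establish it. The comparator $\bar\vv=\vy+c^{-J}(\vy_{\vx,\gamma}^*-\vy)$ lies in the sublevel set. Indeed, every $\bar\vv_i=\vy+c^{i-J}(\vy_{\vx,\gamma}^*-\vy)$ is on the segment $[\vy,\vy_{\vx,\gamma}^*]$, so $f_{\vx,\gamma}(\bar\vv_i)\le f_{\vx,\gamma}(\vy)$ by convexity. Its distance to $\vy$ is $c^{-J}\|\vy-\vy_{\vx,\gamma}^*\|\ge \mathrm{e}^{-1}\|\vy-\vy_{\vx,\gamma}^*\|$.

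Now take $f\equiv 0$, which is convex and Hessian-Lipschitz for every $L>0$, so $\kappa$ can be made arbitrarily large, and take $\vx=\vzero$. Then $\vy_{\vx,\gamma}^*=\vzero$, and a point with gap exactly $\Delta$ has $\|\vy\|=(3\Delta/\gamma)^{1/3}$. The sublevel set therefore contains points with $r\ge \mathrm{e}^{-1}3^{1/3}\kappa^{1/3}(\Delta/H)^{1/3}$. This is far beyond $(J+1)^{4/3}(\Delta/H)^{1/3}=\mathcal{O}(\log^{4/3}\kappa)\,(\Delta/H)^{1/3}$. The lemma itself is not contradicted here; only your isotropic estimate $Hc^ir\,\mI_d$ is too lossy.

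Your remark that $\phi$ is nondecreasing beyond $\vy$ is also wrong, since the ray may point toward $\vy_{\vx,\gamma}^*$. The fallback has the same defect. Chaining $\nabla^2 f_{\vx,\gamma}(\vv_i)\preceq\nabla^2 f_{\vx,\gamma}(\vv_{i-1})+Hc^{i-1}(r/J)\mI_d$ telescopes back to $Hc^ir$. In general, any bound of the form ``Hessian at another point plus $H\times$distance'' inherits the $\kappa^{1/3}$ loss, because distances in the sublevel set really are of order $(\Delta/\gamma)^{1/3}$.

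The missing idea is to control the curvature at $\vv_i$, for $i<J$, through function values rather than distances. Your Hessian formula and gap bounds $f_{\vx,\gamma}(\vv_i)-f_{\vx,\gamma}^*\lesssim\lambda^{-i}\Delta$ are correct and are exactly what is needed for this. The paper uses $\vv_i=(\vy+J\vv_{i+1})/(J+1)$. For a unit $\mathbf{e}$ and small $t>0$, convexity gives
\[
f_{\vx,\gamma}(\vv_i\pm t\mathbf{e})\le\tfrac{1}{J+1}\bigl(f_{\vx,\gamma}(\vy\pm(J+1)t\mathbf{e})+Jf_{\vx,\gamma}(\vv_{i+1})\bigr).
\]
Adding the two inequalities and applying second-order Taylor bounds with $Ht^3$ remainders at $\vy$ and at $\vv_i$ yields
\[
\mathbf{e}^\top\nabla^2 f_{\vx,\gamma}(\vv_i)\mathbf{e}\le(J+1)\,\mathbf{e}^\top\nabla^2 f_{\vx,\gamma}(\vy)\mathbf{e}+\mathcal{O}(J^2Ht)+\mathcal{O}(\Delta\lambda^{-(i+1)}/t^2).
\]
The last term uses the gaps at $\vy$ and $\vv_{i+1}$. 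Optimizing over $t$ gives $(J+1)\nabla^2 f_{\vx,\gamma}(\vy)+\tfrac52(J+1)^{4/3}(H^2\Delta)^{1/3}\lambda^{-(i+1)/3}\mI_d$. This bound is independent of $r$, and the weights $\lambda^ic^{2i}$ make the sum over $i<J$ converge because $\lambda^{2/3}c^2\le 1/4$.

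Only the last point $\vv_J$ is handled by the crude Lipschitz bound. There the weight $\lambda^J\le\kappa^{-1/3}$ cancels the $\kappa^{1/3}$ in $HR=6^{1/3}\kappa^{1/3}(H^2\Delta)^{1/3}$, where $R=(6\Delta/\gamma)^{1/3}$. That cancellation, not a small sublevel-set radius, is the role of the choice $\lambda^{-J}\approx\kappa^{1/3}$. It is also why $\mB$ carries the factor $2(J+1)$ on $\nabla^2 f_{\vx,\gamma}(\vy)$.
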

\begin{proof}
Recalling that $\lambda =  1/64$, we define $W = \sum_{i=0}^J \lambda^i \le 64/63$. Suppose $g(\vv)\le g(\vy)$, so
\begin{align}\label{eq:upper-individual-gaps}
    0\le f_{\vx,\gamma}(\vv_i)-f_{\vx,\gamma}^*
\le\lambda^{-i}\bigl(g(\vv)-Wf_{\vx,\gamma}^*\bigr)
\le\lambda^{-i}\bigl(g(\vy)-Wf_{\vx,\gamma}^*\bigr)
\le W\Delta\lambda^{-i},
\end{align}
For $\Delta>0$ satisfying $f_{\vx,\gamma}(\vy)-f_{\vx,\gamma}^*\le \Delta$, we know from Lemma \ref{lem:sub-prob-grad-dom}(2), $\|\vy-\vy_{\vx,\gamma}^*\|$ is bounded by $R\triangleq(6\Delta/\gamma)^{1/3}$. Then, applying the second inequality of Lemma \ref{lem:sub-prob-grad-dom}(2),
we have
\begin{equation}\label{eq:upper-individual-distance}
\|\vv-\vy\|\le\|\vv-\vy_{\vx,\gamma}^*\|+\|\vy-\vy_{\vx,\gamma}^*\|\le(1+W^{1/3})R\le3R.
\end{equation}
For $i<J$, the adjacent extrapolated points satisfy the relationship 
$\vv_i=  (\vy+ J \vv_{i+1}) / (J+1)$.
For every unit vector $\mathbf{e}$ and $t>0$, convexity gives
\[
f_{\vx,\gamma}(\vv_i\pm t\mathbf{e})
\le  \frac{1}{J+1} \left(f_{\vx,\gamma}(\vy \pm (J+1) t\mathbf{e})
+ J f_{\vx,\gamma}(\vv_{i+1}) \right).
\]
Adding these inequalities and applying the second-order Taylor bounds
\begin{align*}
    \left|f_{\vx,\gamma}(\vu+\vh) + f_{\vx,\gamma}(\vu-\vh) - 2f_{\vx,\gamma}(\vu) - \vh^\top \nabla^2f_{\vx,\gamma} (\vu)\vh\right|
    \leq  \frac{H}{3} \Vert\vh\Vert^3
\end{align*}
at $\vy$ and $\vv_i$ yields
\begin{align*}
    &2f_{\vx,\gamma}(\vv_i) + t^2\mathbf{e}^{\top}\nabla^2f_{\vx,\gamma}(\vv_i)\mathbf{e}-\frac{H}{3}t^3
    \\
    \leq & \frac{2}{J+1}f_{\vx,\gamma}(\vy) + (J+1)t^2\mathbf{e}^{\top} \nabla^2f_{\vx,\gamma}(\vy) \mathbf{e} + \frac{H}{3}(J+1)^2t^3 + \frac{2J}{J+1}f_{\vx,\gamma}(\vv_{i+1}).
\end{align*}
Next, we can use inequality (\ref{eq:upper-individual-gaps}) to obtain that
\begin{align*}
&\mathbf{e}^{\top}\nabla^2f_{\vx,\gamma}(\vv_i)\mathbf{e} \\
&\le (J+1) \mathbf{e}^{\top}  \nabla^2f_{\vx,\gamma}(\vy) \mathbf{e}+\frac{H(1+(J+1)^2)}3t
+\frac{2}{ t^2 (J+1)}(f_{\vx,\gamma}(\vy)+J f_{\vx,\gamma}(\vv_{i+1})-(J+1)f_{\vx,\gamma}(\vv_i))\\
&\le (J+1)\mathbf{e}^{\top}  \nabla^2f_{\vx,\gamma}(\vy) \mathbf{e}+\frac{H(1+(J+1)^2)}3t
+\frac{2}{t^2 (J+1)}(f_{\vx,\gamma}(\vy)-f_{\vx,\gamma}^*+J(f_{\vx,\gamma}(\vv_{i+1})-f_{\vx,\gamma}^*))\\
&\le (J+1) \mathbf{e}^{\top}  \nabla^2f_{\vx,\gamma}(\vy) \mathbf{e}+\frac{H(1+(J+1)^2)}3t
+\frac{2}{t^2 (J+1)}(\Delta+ J W \Delta \lambda^{-(i+1)})\\
&\le (J+1)\mathbf{e}^{\top}  \nabla^2f_{\vx,\gamma}(\vy) \mathbf{e}+\frac{H(1+(J+1)^2)}3t+\frac{2W \Delta \lambda^{-(i+1)}}{t^2}.
\end{align*}
Choose $t=(\Delta \lambda^{-(i+1)}/(H (J+1)^2))^{1/3}$ and write
$\delta=(H^2 \Delta )^{1/3}$. Since $J \ge 1$,
\begin{align} \label{eq:upper-adjacent-curvature}
\begin{split}
\nabla^2f_{\vx,\gamma}(\vv_i)
\preceq & (J+1) \nabla^2f_{\vx,\gamma}(\vy) +\left(2W+\frac{1+(J+1)^2}{3 (J+1)^2}\right) (J+1)^{4/3}H^{2/3} \Delta^{1/3}\lambda^{-(i+1)/3} \mI_d \\
\preceq & (J+1) \nabla^2f_{\vx,\gamma}(\vy)+\frac52 (J+1)^{4/3}\delta\lambda^{-(i+1)/3} \mI_d.
\end{split}    
\end{align}
For $c = (1+1/J)$, multiplying by $\lambda^ic^{2i}$ and summing over $i<J$, using Hessian Lipschitzness in Assumption~\ref{asm:Hessian-Lip} and inequality~(\ref{eq:upper-individual-distance}), we can deduce that
\begin{align*}
    \nabla^2 g(\vv)&=\sum_{i=0}^{J}\lambda^i c^{2i} \nabla^2f_{\vx,\gamma}(\vv_i)\\
    &\preceq \sum_{i=0}^{J-1} \lambda^i c^{2i} \left( (J+1) \nabla^2f_{\vx,\gamma}(\vy)+\frac52 (J+1)^{4/3}\delta\lambda^{-(i+1)/3}\mI_d\right)
    +\lambda^Jc^{2J}\nabla^2f_{\vx,\gamma}(\vv_J)\\
    &\preceq (J+1) \nabla^2f_{\vx,\gamma}(\vy)\sum_{i=0}^{J-1} c^{2i}\lambda^i+\frac52 (J+1)^{4/3}\delta \mI_d \sum_{i=0}^{J-1} c^{2i}\lambda^{(2i-1)/3}
    +\lambda^Jc^{2J}(\nabla^2f_{\vx,\gamma}(\vy)+H\Vert \vv_J-\vy\Vert \mI_d)\\
    &\preceq (J+1) \nabla^2f_{\vx,\gamma}(\vy) \sum_{i=0}^{J} c^{2i}\lambda^i+\frac52 (J+1)^{4/3}\delta \mI_d \sum_{i=0}^{J-1} c^{2i}\lambda^{(2i-1)/3}
    +3\lambda^Jc^{3J}HR \mI_d.
\end{align*}
Since
$\lambda c^2\le 1/16$ and $\lambda^{2/3}c^2\le 1/4$,
$\sum_{i=0}^{J} c^{2i}\lambda^i\leq {16}/{15}$ and $2.5\sum_{i=0}^{J-1} c^{2i}\lambda^{(2i-1)/3} \leq 40/3$. Also $c^J\le {\rm e}$ and $\lambda^J\le\kappa^{-1/3}$,
so
\begin{align*}
    3\lambda^Jc^{3J}HR\leq 3\kappa^{-1/3}{\rm e}^3H(6\Delta/\gamma)^{1/3}=3{\rm e}^3 6^{1/3}\delta<132 (H^2 \Delta)^{1/3}.
\end{align*}
Therefore,
\begin{align*}
    \nabla^2 g(\vv)\preceq\frac{16}{15}(J+1) \nabla^2f_{\vx,\gamma}(\vy)+\left(\frac{40}{3} (J+1)^{4/3}+132\right)(H^2 \Delta)^{1/3} \mI_d \preceq \mB.
\end{align*}
\end{proof}

\subsection{Preconditioned Accelerated Gradient Descent with Segment Search}

In this section, we leverage the properties of the extrapolated objective and propose a preconditioned accelerated gradient descent with segment search (\texttt{PAGD-Seg}) in Algorithm~\ref{alg:upper-gap-reduction} to implement Condition~\ref{cond:gapreduce}.

An important consequence of the matrix $\mB$ defined in Eq. (\ref{eq:upper-metric}) is the following fact, which indicates that there exists an approximate minimizer $\bar \vv \approx \arg \min_{\vv \in \sR^d} g(\vv)$ that satisfies the following properties.

\begin{lem} \label{lem:comparator}
Under Assumptions \ref{asm:convex} and \ref{asm:Hessian-Lip}, for $\Delta>0 $ satisfies $ f_{\vx,\gamma}(\vy)-f_{\vx,\gamma}^* \le \Delta$, the averaged point $\bar{\vv}=\vy+c^{-J}(\vy_{\vx,\gamma}^*-\vy)$ satisfies
\begin{enumerate}
    \item $g(\bar \vv)- \sum_{i=0}^J \lambda^i f_{\vx,\gamma}^*<0.643\Delta$;
    \item $\|\bar{\vv}-\vy\|_\mB^2 \le 90 (J+1)^{4/3}\kappa^{2/3} \Delta$.
\end{enumerate}


\end{lem}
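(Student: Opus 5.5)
The plan is to use the fact that $\bar\vv$ is chosen so that the extrapolated points become explicit convex combinations of $\vy$ and $\vy_{\vx,\gamma}^*$. Writing $\vd=\vy_{\vx,\gamma}^*-\vy$, we have $\vv_i=\vy+c^{i}(\bar\vv-\vy)=\vy+\theta_i\vd$ with $\theta_i=c^{i-J}\in(0,1]$ for $0\le i\le J$. In particular $\vv_J=\vy_{\vx,\gamma}^*$, and each $\vv_i$ lies on the segment $[\vy,\vy_{\vx,\gamma}^*]$. Two elementary facts about $c=1+1/J$ will be used repeatedly: $2\le c^J\le {\rm e}$ for every integer $J\ge1$, hence $c^{-J}\ge {\rm e}^{-1}$ and $c^{-2J}\le 1/4$.

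\textbf{Item 1.} By convexity of $f_{\vx,\gamma}$ along the segment,
\[
f_{\vx,\gamma}(\vv_i)-f_{\vx,\gamma}^*\le(1-\theta_i)\bigl(f_{\vx,\gamma}(\vy)-f_{\vx,\gamma}^*\bigr)\le(1-c^{i-J})\Delta .
\]
The $i=J$ term vanishes. For every $i<J$ we have $1-c^{i-J}\le 1-c^{-J}\le 1-{\rm e}^{-1}$. Multiplying by $\lambda^i$ and summing gives
\[
g(\bar\vv)-\sum_{i=0}^J\lambda^i f_{\vx,\gamma}^*\le(1-{\rm e}^{-1})\,W\,\Delta,
\]
where $W=\sum_{i=0}^{J}\lambda^i\le 64/63$. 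Since $(1-{\rm e}^{-1})\cdot 64/63\approx0.6422<0.643$, item 1 follows.

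\textbf{Item 2.} Since $\bar\vv-\vy=c^{-J}\vd$, we have
\[
\|\bar\vv-\vy\|_\mB^2=c^{-2J}\Bigl(2(J+1)\,\vd^\top\nabla^2 f_{\vx,\gamma}(\vy)\vd+100(J+1)^{4/3}(H^2\Delta)^{1/3}\|\vd\|^2\Bigr).
\]
The first term is bounded by Lemma~\ref{lem:upper-snapshot-distance}: $\vd^\top\nabla^2 f_{\vx,\gamma}(\vy)\vd\le 6(1+\kappa^{2/3})\Delta\le12\kappa^{2/3}\Delta$, using $\kappa\ge2$. For the second term, item 2 of Lemma~\ref{lem:sub-prob-grad-dom} gives $\|\vd\|^3\le 6\Delta/\gamma$, exactly as in the proof of Lemma~\ref{lem:upper-extrapolated-curvature}. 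Hence
\[
(H^2\Delta)^{1/3}\|\vd\|^2\le 6^{2/3}\kappa^{2/3}\Delta .
\]
Combining these with $c^{-2J}\le1/4$ and $J+1\le(J+1)^{4/3}$ yields
\[
\|\bar\vv-\vy\|_\mB^2\le\Bigl(6+25\cdot 6^{2/3}\Bigr)(J+1)^{4/3}\kappa^{2/3}\Delta .
\]
Since $6+25\cdot6^{2/3}\approx 88.6\le 90$, item 2 follows.

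\textbf{Main obstacle.} The work lies in tracking the numerical constants, not in the ideas. The key point is that the factor $c^{-2J}\le1/4$ is needed to absorb the large constant $100\cdot6^{2/3}\approx330$ coming from the definition of $\mB$. Without it the bound would exceed $90$, so I would verify the inequalities $(1+1/J)^J\in[2,{\rm e}]$ carefully first.
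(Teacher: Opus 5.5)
Your proposal is correct and follows the paper's proof. Item 1 uses convexity along the segment $[\vy,\vy_{\vx,\gamma}^*]$ with $c^{-J}\ge {\rm e}^{-1}$ and $W\le 64/63$. Item 2 uses $c^{-2J}\le 1/4$, Lemma~\ref{lem:upper-snapshot-distance}, and $\|\vy-\vy_{\vx,\gamma}^*\|^3\le 6\Delta/\gamma$, arriving at the same constant $6+25\cdot 6^{2/3}\le 90$.
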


\begin{proof}
For $c = (1+1/J)$ and $W = \sum_{i=0}^J \lambda^i$, all the points $\bar \vv_i = \vy+c^{i-J}(\vy_{\vx,\gamma}^*-\vy)$ lie on the segment $[\vy,\vy_{\vx,\gamma}^*]$. Since $c^{-J}\ge {\rm e}^{-1}$,
the convexity of $f_{\vx,\gamma}$ implies
$g(\bar{\vv})-Wf_{\vx,\gamma}^*
\le W(1-{\rm e}^{-1}) \Delta<0.643 \Delta$.
Finally, $c^{-J}\le1/2$ and Lemma~\ref{lem:upper-snapshot-distance} imply
\[
\|\bar{\vv}-\vy\|_\mB^2
= c^{-2J}\|\vy_{\vx,\gamma}^*-\vy \|_\mB^2
\le\bigl(6 (J+1)+25\cdot6^{2/3} (J+1)^{4/3}\bigr)\kappa^{2/3} \Delta
\le90 (J+1)^{4/3}\kappa^{2/3} \Delta.
\]
\end{proof}


The \texttt{PAGD-Seg} algorithm combines preconditioned accelerated gradient steps with a segment search
between the current iterate and the auxiliary sequence.
The search keeps the gradient steps within the sublevel set where
Lemma~\ref{lem:upper-extrapolated-curvature} applies, while controlling the error in the accelerated potential argument in standard AGD analysis \citep{nesterov2018lectures}.
We require the following segment-search guarantee.



\begin{cond}\label{cond:segment-search}
For any differentiable convex function $g$, points $\vx,\vv\in\sR^d$, and $\zeta>0$, the oracle
$\vy=\texttt{FiniteSegSearch}(g,\vx,\vv,\zeta)$ returns a point $\vy\in[\vx,\vv]$ satisfying
\[
g(\vy)\le g(\vx),\qquad
\langle\nabla g(\vy),\vy-\vv\rangle\le\zeta.
\]
\end{cond}

\begin{algorithm}[t]
\caption{\texttt{PAGD-Seg}$(f_{\vx,\gamma},\vy,\Delta,H,\gamma,N)$}
\label{alg:upper-gap-reduction}
\begin{algorithmic}[1]
\STATE Query $\nabla^2f_{\vx,\gamma}(\vy)$ and form $g,\mB$ using~\eqref{eq:upper-extrapolated-objective} and~\eqref{eq:upper-metric}.
\STATE Set $\vx_0=\vv_0=\vy$, $A_0=0$, and $\zeta=\Delta/100$.
\FOR{$k=0,\ldots,N-1$}
\STATE $\vy_k=\texttt{FiniteSegSearch}(g,\vx_k,\vv_k,\zeta)$ and $\vh_k=\nabla g(\vy_k)$.
\STATE Set $a_{k+1}=(1+\sqrt{1+4A_k})/2$ and $A_{k+1}=A_k+a_{k+1}$.
\STATE Update $\vx_{k+1}=\vy_k-\mB^{-1}\vh_k$ and $\vv_{k+1}=\vv_k-a_{k+1}\mB^{-1}\vh_k$.
\ENDFOR
\STATE \textbf{return} $\vx_N$.
\end{algorithmic}
\end{algorithm}

Let $\phi(t)=g(\vx+t ( \vv - \vy))$. Using the convexity of this one-dimensional function, we show in Section \ref{sec:segment-search} that the above condition can be fulfilled by finding $l \in [0,1]$ such that $- \zeta \le \phi'(l) \le 0$. Therefore, we can implement the above oracle using a simple bisection search. We now assume that this condition is satisfied and state the accelerated rate. achieved by the \texttt{PAGD-Seg} algorithm.

\begin{lem}\label{lem:upper-gap-reduction}
Under Assumptions~\ref{asm:convex} and~\ref{asm:Hessian-Lip}, and Condition~\ref{cond:segment-search}, Algorithm~\ref{alg:upper-gap-reduction} with
\[
\zeta={\Delta}/{100},\qquad
N=\left\lceil64(J+1)^{2/3}(H/\gamma)^{1/3}\right\rceil
\]
satisfies Condition~\ref{cond:gapreduce}, \textit{i.e.},  for any $\Delta>0$ and $\vy$ with $f_{\vx,\gamma}(\vy)-f_{\vx,\gamma}^*\le\Delta$, its output satisfies
\begin{equation}\label{eq:upper-gap-contraction}
f_{\vx,\gamma}(\vx_N)-f_{\vx,\gamma}^*\le 3\Delta/4.
\end{equation}
It uses one Hessian evaluation, $N$ segment-search calls, and $N$ additional gradient evaluations of $g$. With the implementation in Algorithm~\ref{alg:finite-seg-search}, the total number of gradient evaluations of $f_{\vx,\gamma}$ is $\mathcal{O}(\kappa^{1/3}\log^{8/3}\kappa)$.
\end{lem}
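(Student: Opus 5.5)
We will run the standard estimate-sequence analysis of AGD in the $\mB$-norm, with two modifications: the segment search replaces the usual convex combination $\vy_k = (A_k\vx_k + a_{k+1}\vv_k)/A_{k+1}$, and the smoothness bound comes from Lemma~\ref{lem:upper-extrapolated-curvature}, which holds only on the sublevel set $\mathcal{L}=\{\vv: g(\vv)\le g(\vy)\}$.

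\textbf{Step 1: the iterates stay in the sublevel set.} We prove by induction that $g(\vx_k)\le g(\vy)$ and $g(\vy_k)\le g(\vx_k)$. The second inequality is part of Condition~\ref{cond:segment-search}. For the first, $\vy_k\in\mathcal{L}$, and $\mathcal{L}$ is convex. We need the whole segment $[\vy_k,\vx_{k+1}]$ to lie in $\mathcal{L}$ so that the bound $\nabla^2 g\preceq\mB$ can be integrated along it. We use a continuity argument: let $\tau^*$ be the first exit time from $\mathcal{L}$ along $\vy_k-\tau\mB^{-1}\vh_k$. On $[0,\tau^*]$ the descent lemma gives $g(\vy_k-\tau\mB^{-1}\vh_k)\le g(\vy_k)-(\tau-\tau^2/2)\|\vh_k\|_{\mB^{-1}}^2<g(\vy_k)$ for $\tau\in(0,1]$. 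This contradicts exit unless $\vh_k=0$. Hence the whole segment stays in $\mathcal{L}$, and
\[
g(\vx_{k+1})\le g(\vy_k)-\tfrac12\|\vh_k\|_{\mB^{-1}}^2 .
\]

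\textbf{Step 2: potential argument.} Let $\bar\vv$ be the comparator of Lemma~\ref{lem:comparator} and set $g^\star_W = \sum_i\lambda^i f^*_{\vx,\gamma}$. Define
\[
\Phi_k = A_k\bigl(g(\vx_k)-g(\bar\vv)\bigr)+\tfrac12\|\vv_k-\bar\vv\|_\mB^2 .
\]
The usual computation uses the mirror step $\|\vv_{k+1}-\bar\vv\|_\mB^2 = \|\vv_k-\bar\vv\|_\mB^2 - 2a_{k+1}\langle\vh_k,\vv_k-\bar\vv\rangle + a_{k+1}^2\|\vh_k\|_{\mB^{-1}}^2$, the descent inequality from Step 1, $a_{k+1}^2=A_{k+1}$, and convexity in the form $g(\bar\vv)\ge g(\vy_k)+\langle\vh_k,\bar\vv-\vy_k\rangle$.

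The only term that does not telescope is the momentum term. In standard AGD it vanishes because $A_k(\vy_k-\vx_k)=a_{k+1}(\vv_k-\vy_k)$. Here we bound it instead:
\begin{itemize}
\item $\vy_k\in[\vx_k,\vv_k]$ and convexity give $\langle\vh_k,\vy_k-\vx_k\rangle\ge g(\vy_k)-g(\vx_k)$;
\item Condition~\ref{cond:segment-search} gives $\langle\vh_k,\vy_k-\vv_k\rangle\le\zeta$.
\end{itemize}
The term $A_k(g(\vy_k)-g(\vx_k))\le0$ can be absorbed with the right sign. It is the term to check most carefully. Splitting $a_{k+1}\langle\vh_k,\bar\vv-\vv_k\rangle = a_{k+1}\langle\vh_k,\bar\vv-\vy_k\rangle + a_{k+1}\langle\vh_k,\vy_k-\vv_k\rangle$ makes $A_{k+1}g(\vy_k)$ appear, and the result is $\Phi_{k+1}\le\Phi_k+a_{k+1}\zeta$. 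Summing and using $A_N\ge N^2/4$:
\[
g(\vx_N)-g(\bar\vv)\le \frac{\|\vy-\bar\vv\|_\mB^2}{2A_N}+\frac{\zeta\sum_k a_{k+1}}{A_N}\le \frac{2\|\vy-\bar\vv\|_\mB^2}{N^2}+\frac{2\zeta}{\sqrt{A_N}}\cdot\frac{\sum a}{\sqrt{A_N}} .
\]
I expect the $\zeta$-error term to be the main obstacle. Since $\sum_k a_{k+1}=A_N$, the accumulated error is simply $\zeta=\Delta/100$, which is acceptable.

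\textbf{Step 3: conclusion.} By Lemma~\ref{lem:comparator}(2) and the choice of $N$, the first term is at most $180(J+1)^{4/3}\kappa^{2/3}\Delta/N^2\le 180\Delta/64^2<0.05\Delta$. Hence $g(\vx_N)-g^\star_W\le(0.643+0.05+0.01)\Delta<0.75\Delta$. Since every term of $g$ exceeds its minimum and the $i=0$ term is $f_{\vx,\gamma}(\vx_N)$ (because $\vv_0=\vx_N$), we get $f_{\vx,\gamma}(\vx_N)-f^*_{\vx,\gamma}\le g(\vx_N)-g^\star_W<3\Delta/4$, which is (\ref{eq:upper-gap-contraction}).

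\textbf{Counting.} One Hessian is queried at line 1. Each gradient of $g$ costs $J+1=\gO(\log\kappa)$ gradients of $f_{\vx,\gamma}$. Each bisection in Algorithm~\ref{alg:finite-seg-search} needs $\gO(\log(\cdot))$ steps, with the logarithm's argument polynomial in $\kappa$ from the bounded range of $\phi'$ relative to $\zeta$. Multiplying by $N=\gO(\kappa^{1/3}\log^{2/3}\kappa)$ gives the stated $\gO(\kappa^{1/3}\log^{8/3}\kappa)$, where the extra $\log^2$ comes from $J+1$ times the bisection depth.
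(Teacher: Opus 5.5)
The contraction part of your proof is correct and follows the paper's argument. It uses the same sublevel-set descent step for $\vx_{k+1}=\vy_k-\mB^{-1}\vh_k$ and the same Lyapunov function $A_k(g(\vx_k)-g(\bar\vv))+\tfrac12\|\vv_k-\bar\vv\|_\mB^2$. It reaches the same one-step bound $\Phi_{k+1}\le\Phi_k+a_{k+1}\zeta$ by dropping $A_k(g(\vy_k)-g(\vx_k))\le0$ and using $\langle\vh_k,\vy_k-\vv_k\rangle\le\zeta$, and it does the same bookkeeping with Lemma~\ref{lem:comparator}. Your first bullet is superfluous: routing through $A_k\langle\vh_k,\vy_k-\vx_k\rangle$ would leave a term that Condition~\ref{cond:segment-search} does not control, and the direct drop is what closes the argument.

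The gap is the gradient count, which is part of the statement. You assert that each bisection takes $\gO(\log\kappa)$ steps because of ``the bounded range of $\phi'$ relative to $\zeta$.'' A bounded range is not what bisection needs. It only forces $-\phi'(l)\le\zeta$ through a Lipschitz bound $-\phi'(l)\le\phi'(r)-\phi'(l)\le M(r-l)$. So you must show $M/\zeta=\mathrm{poly}(\kappa)$, where $M=(\vv_k-\vx_k)^\top(2\nabla^2f_{\vx,\gamma}(\vy)+2H\rho\mI_d)(\vv_k-\vx_k)$ and $\rho=\max\{\|\vx_k-\vy\|,\|\vv_k-\vy\|\}$.

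This bound is not free, for two reasons. First, $\vv_k$ is not in the sublevel set, so (\ref{eq:upper-individual-distance}) says nothing about it. Second, $\nabla^2 f_{\vx,\gamma}(\vy)$ has no a priori bound under the assumptions, so the Hessian-weighted term could dwarf $\zeta=\Delta/100$.

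The paper handles this in Lemma~\ref{lem:finite-segment-search}, in four steps:
\begin{enumerate}
\item From $g(\vx_k)\ge Wf^*_{\vx,\gamma}$, Lemma~\ref{lem:comparator}(1) and the potential bound (\ref{eq:upper-potential-bound}), it gets $\|\vv_k-\bar\vv\|_\mB^2=\gO((J+1)^{4/3}\kappa^{2/3}\Delta)$.
\item Since $\mB\succeq100(J+1)^{4/3}(H^2\Delta)^{1/3}\mI_d$, this gives $\|\vv_k-\bar\vv\|=\gO((\Delta/\gamma)^{1/3})$.
\item Since $\mB\succeq2(J+1)\nabla^2f_{\vx,\gamma}(\vy)$, it also gives a bound in the $\nabla^2f_{\vx,\gamma}(\vy)$-norm.
\item Separately, $\|\vx_k-\vy^*_{\vx,\gamma}\|^2_{\nabla^2f_{\vx,\gamma}(\vy)}=\gO(\kappa\Delta)$ follows by integrating from $\vy^*_{\vx,\gamma}$ with Hessian Lipschitzness.
\end{enumerate}
Together these yield $M=\gO(\kappa\Delta)$. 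Once you supply that lemma or this argument, your final count $N(J+1)\cdot\gO(\log\kappa)=\gO(\kappa^{1/3}\log^{8/3}\kappa)$ is right.
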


\begin{proof}
We first need a descent property in the sublevel set.
For any $\vy'$ with $g(\vy')\le g(\vy)$, put $\vh=\nabla g(\vy')$ and $\phi(t)=g(\vy'-t\mB^{-1}\vh)$ for $t\in[0,1]$. Whenever $\phi(t_0)\le\phi(0)$, convexity places the whole segment up to $t_0$ in the initial sublevel set. Lemma~\ref{lem:upper-extrapolated-curvature} then gives
\begin{align*}
\phi(t_0)-\phi(0)
&=-t_0\|\vh\|_{\mB^{-1}}^2
+\int_0^{t_0}(t_0-t)(\mB^{-1}\vh)^\top
\nabla^2g(\vy'-t\mB^{-1}\vh)(\mB^{-1}\vh)\,\mathrm{d}t\\
&\le-(t_0-t_0^2/2)\|\vh\|_{\mB^{-1}}^2.
\end{align*}
If $\phi(1)>\phi(0)$, then $\phi'(0)=-\|\vh\|_{\mB^{-1}}^2<0$ implies that some $t_0\in(0,1)$ satisfies $\phi(t_0)=\phi(0)$, contradicting this strict decrease. Thus $\phi(1)\le\phi(0)$, and taking $t_0=1$ yields
\begin{equation}\label{eq:upper-preconditioned-descent}
g(\vy'-\mB^{-1}\vh)\le g(\vy')-\|\vh\|_{\mB^{-1}}^2/2, \quad \forall \{ \vy' \in \sR^d: g(\vy') \le g(\vy) \}.
\end{equation}
Starting from $\vx_0=\vy$, Condition~\ref{cond:segment-search} and inequality (\ref{eq:upper-preconditioned-descent}) imply inductively that
\[
g(\vx_{k+1})\le g(\vy_k)\le g(\vx_k)\le g(\vy).
\]
For the approximate minimizer $\bar{\vv}$ in Lemma~\ref{lem:comparator}, define the Lyapunov function of AGD as
\[
E_k=A_k\bigl(g(\vx_k)-g(\bar{\vv})\bigr)
+\|\vv_k-\bar{\vv}\|_{\mB}^2/2.
\]
Therefore, the decrement of the Lyapunov function can be decomposed into two parts
\begin{align*}
E_{k+1} - E_k =& \underbrace{A_{k+1} (g(\vx_{k+1}) - g(\bar \vv)) - A_k (g(\vx_k) - g(\bar \vv))}_{\rm (I)} \\    
+& \underbrace{\|\vv_{k+1}-\bar{\vv}\|_{\mB}^2/2 - \|\vv_k-\bar{\vv}\|_{\mB}^2/2 }_{\rm (II)}.
\end{align*}

For part (I), we can use inequality (\ref{eq:upper-preconditioned-descent}) and the fact that $A_{k+1} = A_k + a_{k+1}$ to obtain
\begin{align*}
{\rm (I)} \le & A_{k+1} (g(\vy_k) - g(\bar \vv)) - \frac{a_{k+1}^2}{2} \|\vh_k \|_{\mB^{-1}}^2  - A_k(g(\vx_k) -g(\bar \vv)) \\
=& A_k (g(\vy_k) - g(\vx_k)) + a_{k+1} (g(\vy_k) - g(\bar{\vv})) - \frac{a_{k+1}^2}{2} \|\vh_k \|_{\mB^{-1}}^2.
\end{align*}
For part (II), substituting the update rule $\vv_{k+1} = \vv_k - a_{k+1} \mB^{-1} \vh_k$ into the quadratic term yields
\begin{align*}
{\rm (II)} =& \frac{1}{2} \|\vv_k - a_{k+1} \mB^{-1} \vh_k - \bar{\vv}\|_{\mB}^2 - \frac{1}{2} \|\vv_k - \bar{\vv}\|_{\mB}^2 = -a_{k+1} \langle \vh_k, \vv_k - \bar{\vv} \rangle + \frac{a_{k+1}^2}{2} \|\vh_k\|_{\mB^{-1}}^2.
\end{align*}
Combining (I) and (II), the squared-gradient term ${a_{k+1}^2} \|\vh_k\|_{\mB^{-1}}^2/2$ cancels out exactly, giving
\begin{align*}
E_{k+1} - E_k \le A_k \bigl(g(\vy_k) - g(\vx_k)\bigr) + a_{k+1} \bigl(g(\vy_k) - g(\bar{\vv}) - \langle \vh_k, \vv_k - \bar{\vv} \rangle\bigr).
\end{align*}
Condition~\ref{cond:segment-search} gives $g(\vy_k)\le g(\vx_k)$, while convexity gives $g(\vy_k)-g(\bar{\vv})\le\langle\vh_k,\vy_k-\bar{\vv}\rangle$. Since $A_k\ge0$, substituting these bounds and using the second inequality in Condition~\ref{cond:segment-search} yields
\begin{align*}
E_{k+1}-E_k
&\le a_{k+1}\bigl(\langle\vh_k,\vy_k-\bar{\vv}\rangle-\langle\vh_k,\vv_k-\bar{\vv}\rangle\bigr) =a_{k+1}\langle\vh_k,\vy_k-\vv_k\rangle
\le a_{k+1}\zeta.
\end{align*}
Since $A_0=0$, we have \(E_0=\|\vy-\bar{\vv}\|_{\mB}^2/2\). Therefore, telescoping the above inequality yields
\begin{equation}\label{eq:upper-potential-bound}
E_k\le \|\vy-\bar{\vv}\|_{\mB}^2/2 +A_k\zeta, \quad \forall 0\le k\le N.
\end{equation}
Note that we can prove $A_k \ge (k+1)^2/4$ for all $ k\ge 1$ by induction. The base case $A_1 =1$ is trivial. Assume $A_k \ge (k+1)^2/4$, then we have
\[
\sqrt{A_{k+1}} = \frac{1 + \sqrt{1 + 4A_k}}{2} 
\ge \frac{1 + \sqrt{1 + (k+1)^2}}{2} 
> \frac{1 + (k+1)}{2} 
= \frac{k+2}{2}.
\]
Squaring both sides gives \(A_{k+1} \ge (k+2)^2/{4}\), which completes the induction step. Thus, $A_k \ge {(k+1)^2}/{4}$ holds for all $k \ge 1$. Then, using Lemma~\ref{lem:comparator}(2), we obtain
\begin{equation}\label{eq:upper-accelerated-error}
g(\vx_N)-g(\bar{\vv})
\le\frac{\|\vy-\bar{\vv}\|_{\mB}^2}{2A_N}+\zeta
\le\left(\frac{180}{4096}+\frac1{100}\right)\Delta<0.054\Delta.
\end{equation}
Let $W=\sum_{i=0}^J\lambda^i$. Since every summand of $g(\vx_N)-Wf_{\vx,\gamma}^*$ is nonnegative and its $i=0$ summand is $f_{\vx,\gamma}(\vx_N)-f_{\vx,\gamma}^*$, Lemma~\ref{lem:comparator}(1) gives
\[
f_{\vx,\gamma}(\vx_N)-f_{\vx,\gamma}^*
\le g(\vx_N)-Wf_{\vx,\gamma}^*
<(0.643+0.054)\Delta< 3\Delta/4.
\]
This proves inequality~\ref{eq:upper-gap-contraction}. The algorithm forms $\mB$ with one Hessian query and makes $N$ segment-search calls and $N$ additional gradient queries to~$g$. Each gradient of $g$ uses $J+1$ gradients of $f_{\vx,\gamma}$, since
\[
\nabla g(\vv)=\sum_{i=0}^J\lambda^ic^i\nabla f_{\vx,\gamma}\bigl(\vy+c^i(\vv-\vy)\bigr).
\]
Lemma~\ref{lem:finite-segment-search} proves that each search uses $\mathcal{O}(\log\kappa)$ such gradients, yielding the stated total count.
\end{proof}

\subsection{Implementing the Segment Search Oracle}\label{sec:segment-search}

\begin{algorithm}[t]
\caption{\texttt{FiniteSegSearch}$(g,\vx,\vv,\zeta)$}
\label{alg:finite-seg-search}
\begin{algorithmic}[1]
\STATE Set $\vd=\vv-\vx$ and $\phi(t)=g(\vx+t\vd)$.
\IF{$\phi'(0)\ge0$}
\STATE \textbf{return} $\vx$.
\ENDIF
\IF{$\phi'(1)\le0$}
\STATE \textbf{return} $\vv$.
\ENDIF
\STATE Set $l=0$ and $r=1$.
\WHILE{$-\phi'(l)>\zeta$}
\STATE Set $t=(l+r)/2$ and query $\phi'(t)$.
\IF{$\phi'(t)=0$}
\STATE \textbf{return} $\vx+t\vd$.
\ELSIF{$\phi'(t)<0$}
\STATE Set $l=t$.
\ELSE
\STATE Set $r=t$.
\ENDIF
\ENDWHILE
\STATE \textbf{return} $\vx+l\vd$.
\end{algorithmic}
\end{algorithm}

We implement Condition~\ref{cond:segment-search} by binary search on the directional derivative, as presented in Algorithm \ref{alg:finite-seg-search}.

\paragraph{Finite segment search.}  For $\vd=\vv-\vx$, the restriction $\phi(t)=g(\vx+t\vd)$ is convex on $[0,1]$, so $\phi'(t)=\langle\vd,\nabla g(\vx+t\vd)\rangle$ is nondecreasing. If $\phi'(0)\ge0$ or $\phi'(1)\le0$, the corresponding endpoint satisfies the oracle condition; otherwise, the endpoint derivatives have opposite signs, and we can use a standard bisection procedure to locate a left endpoint $l \in [0,1]$ with $-\zeta\le\phi'(l)\le0$. 

It is not hard to see that the return value satisfies Condition \ref{cond:segment-search}: the monotonicity of $\phi'(t)$ implies that $\phi'(t)$ is negative on $[0,l]$, which implies that $g(\vx+ l \vd) \le g(\vx)$. Moreover, $\phi'(t)=\langle\vd,\nabla g(\vx+t\vd)\rangle$ implies that 
$\langle\nabla g(\vx+l\vd),\vx+l\vd-\vv\rangle=-(1-l)\phi'(l)\le\zeta$, These are precisely the two required inequalities in Condition~\ref{cond:segment-search}.


\paragraph{Number of Segment Search.}
It remains to bound the number of gradient calls in the segment search.
For $\rho=\max\{\|\vx_k-\vy\|,\|\vv_k-\vy\|\}$,
the global Hessian Lipschitz bound by Assumption \ref{asm:Hessian-Lip} gives, for every $\vw\in[\vx_k,\vv_k]$,
\begin{equation}\label{eq:upper-line-curvature}
\nabla^2g(\vw)
\preceq\left(\sum_{i=0}^J\lambda^ic^{2i}\right)\nabla^2f_{\vx,\gamma}(\vy)
+H\|\vw-\vy\|\left(\sum_{i=0}^J\lambda^ic^{3i}\right)\mI_d
\preceq2\nabla^2f_{\vx,\gamma}(\vy)+2H\rho\mI_d.
\end{equation}
Here $\lambda c^2\le1/16$ and $\lambda c^3\le1/8$. Since $g$ is twice continuously differentiable, $\phi'$ has a finite Lipschitz constant $M$ on the compact interval $[0,1]$, where
\begin{equation}\label{eq:upper-line-lipschitz}
M=(\vv_k-\vx_k)^\top(2 \nabla^2f_{\vx,\gamma}(\vy) +2H\rho\mI_d)(\vv_k-\vx_k).
\end{equation}
If $M=0$, then $\phi'$ is constant and the algorithm returns an endpoint after at most two gradient queries. Assume henceforth that $M>0$. After $j$ bisections the size of the bracket satisfies $-\phi'(l)\le M(r-l)=M2^{-j}$. Therefore, the bisection procedure terminates in 
at most $1+\lceil\log_2^+(M/\zeta)\rceil$ steps, with
$\log_2^+a=\max\{0,\log_2 a\}$ for $a>0$. Finally, we can complete the proof by showing that $M / \zeta = \gO(\kappa)$.



\begin{lem}\label{lem:finite-segment-search}
Under Assumptions~\ref{asm:convex} and~\ref{asm:Hessian-Lip}, let $\Delta>0$ and $f_{\vx,\gamma}(\vy)-f_{\vx,\gamma}^*\le\Delta$. For every call made by Algorithm~\ref{alg:upper-gap-reduction} with these inputs and $\zeta=\Delta/100$, Algorithm~\ref{alg:finite-seg-search} terminates and returns a point satisfying Condition~\ref{cond:segment-search}, using $\mathcal{O}(\log\kappa)$ gradient evaluations of $g$. Consequently, Algorithm~\ref{alg:upper-gap-reduction} uses a total of $\mathcal{O}(\kappa^{1/3}\log^{8/3}\kappa)$ gradient evaluations of $f_{\vx,\gamma}$ whenever it is called.
\end{lem}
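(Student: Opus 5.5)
The paper has already reduced the count to at most $1+\lceil\log_2^+(M/\zeta)\rceil$ bisection steps plus two endpoint queries, with $M$ given by \eqref{eq:upper-line-lipschitz}. So the task is to show $M/\zeta=\gO(\kappa)$ uniformly over all calls. My plan is to bound the two pieces of $M$ separately, using only the sublevel-set information from the proof of Lemma~\ref{lem:upper-gap-reduction}.

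\textbf{Step 1: localize the iterates.} The proof of Lemma~\ref{lem:upper-gap-reduction} gives $g(\vx_k)\le g(\vy)$, so \eqref{eq:upper-individual-distance} yields $\|\vx_k-\vy\|\le 3R$ with $R=(6\Delta/\gamma)^{1/3}$. For $\vv_k$ I would use the potential bound \eqref{eq:upper-potential-bound}. Since $g(\vx_k)-g(\bar\vv)\ge -0.643\Delta$ by Lemma~\ref{lem:comparator}(1), and $A_k=\gO(N^2)$, we get
\[
\|\vv_k-\bar\vv\|_\mB^2\le \|\vy-\bar\vv\|_\mB^2+2A_k(\zeta+0.643\Delta)=\gO\bigl(N^2\Delta+(J+1)^{4/3}\kappa^{2/3}\Delta\bigr)=\gO\bigl((J+1)^{4/3}\kappa^{2/3}\Delta\bigr).
\]
Here I use $N^2=\Theta((J+1)^{4/3}\kappa^{2/3})$. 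Adding $\|\bar\vv-\vy\|_\mB^2$ from Lemma~\ref{lem:comparator}(2) gives the same order for $\|\vv_k-\vy\|_\mB^2$, and likewise for $\|\vx_k-\vy\|_\mB^2$ once I bound it. Because $\mB\succeq 100(J+1)^{4/3}(H^2\Delta)^{1/3}\mI_d$, this converts to the Euclidean bound $\|\vv_k-\vy\|^2=\gO(\kappa^{2/3}\Delta/(H^2\Delta)^{1/3})=\gO(R^2)$. Therefore $\rho=\gO(R)$.

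\textbf{Step 2: bound $M$.} Note that $2\nabla^2 f_{\vx,\gamma}(\vy)+2H\rho\mI_d\preceq \mB/(J+1)+2H\rho\mI_d$. For the first term,
\[
\|\vv_k-\vx_k\|_\mB^2\le 2\|\vv_k-\vy\|_\mB^2+2\|\vx_k-\vy\|_\mB^2 .
\]
The $\vv_k$ part is $\gO((J+1)^{4/3}\kappa^{2/3}\Delta)$ by Step 1. For $\vx_k$, the only information is $\|\vx_k-\vy\|\le 3R$, so I bound its $\mB$-norm through $\|\mB\|\le 2(J+1)\|\nabla^2 f_{\vx,\gamma}(\vy)\|+\gO((J+1)^{4/3}(H^2\Delta)^{1/3})$. \textbf{This is the main obstacle}, because $\nabla^2 f_{\vx,\gamma}(\vy)$ may be large. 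If it is, I would instead write $\vx_k$ as an AGD combination: $\vx_{k+1}$ lies in the convex hull of $\vy$, $\vv_{j}$, and preconditioned steps, so $\|\vx_k-\vy\|_\mB$ can be bounded through the $\vv_j$ iterates. Failing that, I would accept the bound $\gO(\kappa)$ without the $J$ and $\kappa^{2/3}$ refinements.

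For the second term, $H\rho\|\vv_k-\vx_k\|^2=\gO(HR^3)=\gO(\kappa\Delta)$. Dividing by $\zeta=\Delta/100$ gives $M/\zeta=\gO(\mathrm{poly}(J)\,\kappa)$. The number of bisections is then $\gO(\log\kappa+\log J)=\gO(\log\kappa)$ gradients of $g$.

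\textbf{Step 3: total count.} The correctness of the returned point was already shown before the lemma. Each gradient of $g$ costs $J+1=\gO(\log\kappa)$ gradients of $f_{\vx,\gamma}$, and there are $N=\gO((J+1)^{2/3}\kappa^{1/3})$ segment searches. The total is therefore
\[
\gO\bigl(\kappa^{1/3}(J+1)^{2/3}\cdot\log\kappa\cdot(J+1)\bigr)=\gO\bigl(\kappa^{1/3}\log^{8/3}\kappa\bigr),
\]
which is the claimed count.
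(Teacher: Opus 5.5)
Your architecture matches the paper's. You localize $\vx_k$ via the sublevel set of $g$ and $\vv_k$ via the potential bound~(\ref{eq:upper-potential-bound}) and Lemma~\ref{lem:comparator}, then split $M$ into a $\nabla^2 f_{\vx,\gamma}(\vy)$-part and the $2H\rho$-part. Your handling of $\vv_k$, of $\rho=\gO(R)$, and of $H\rho\|\vv_k-\vx_k\|^2=\gO(HR^3)=\gO(\kappa\Delta)$ is fine.

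\textbf{The gap.} The step you flag as the main obstacle is the real content of the lemma, and you leave it open. You need $\|\vx_k-\vy\|_{\nabla^2 f_{\vx,\gamma}(\vy)}^2=\gO(\mathrm{poly}(J)\,\kappa\Delta)$. The Euclidean bound $\|\vx_k-\vy\|\le 3R$ cannot give this, because convexity plus a Lipschitz Hessian puts no upper bound on $\|\nabla^2 f_{\vx,\gamma}(\vy)\|$. Estimating through $\|\mB\|$ only yields $M/\zeta\lesssim\|\nabla^2 f_{\vx,\gamma}(\vy)\|R^2/\Delta$, which is not controlled by $\kappa$. So your ``failing that'' fallback of accepting an $\gO(\kappa)$ bound is not available.

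Your AGD sketch is also inaccurate as stated. With segment search, $\vy_k=\vx_k+l_k(\vv_k-\vx_k)$ with $l_k$ unrelated to $a_{k+1}/A_{k+1}$. Hence $\vx_{k+1}=\vy_k-\mB^{-1}\vh_k$ is not a convex combination of $\vx_k$ and $\vv_{k+1}$ as in vanilla Nesterov. You also give no bound on the preconditioned steps.

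\textbf{How to close it.} Your direction can be completed. Since $\vy_k$ lies in the sublevel set, inequality~(\ref{eq:upper-preconditioned-descent}) gives, with $W=\sum_{i=0}^J\lambda^i$, $\|\vh_k\|_{\mB^{-1}}^2\le 2(g(\vy_k)-g(\vx_{k+1}))\le 2(g(\vy)-Wf_{\vx,\gamma}^*)\le 2W\Delta$. Hence $\|\vx_{k+1}-\vy\|_\mB\le\max\{\|\vx_k-\vy\|_\mB,\|\vv_k-\vy\|_\mB\}+\sqrt{2W\Delta}$. Unrolling from $\vx_0=\vy$ gives $\|\vx_k-\vy\|_\mB\le\max_{j\le N}\|\vv_j-\vy\|_\mB+N\sqrt{2W\Delta}=\gO((J+1)^{2/3}\kappa^{1/3}\sqrt{\Delta})$. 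Squaring and dividing by $J+1$ closes your Step~2.

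The paper closes it differently, with a function-value argument. First, $g(\vx_k)\le g(\vy)$ gives $f_{\vx,\gamma}(\vx_k)-f_{\vx,\gamma}^*\le W\Delta$. The segment $[\vy_{\vx,\gamma}^*,\vx_k]$ stays within $3R$ of $\vy$, so $\nabla^2 f_{\vx,\gamma}(\vy')\succeq\nabla^2 f_{\vx,\gamma}(\vy)-3HR\mI_d$ there. Integrating the Hessian from the minimizer then yields $\|\vx_k-\vy_{\vx,\gamma}^*\|_{\nabla^2 f_{\vx,\gamma}(\vy)}^2\le 2W\Delta+3HR\|\vx_k-\vy_{\vx,\gamma}^*\|^2=\gO(\kappa\Delta)$. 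The paper then measures $\vv_k$ against $\vy_{\vx,\gamma}^*$ rather than $\vy$, via $\bar\vv$ and Lemma~\ref{lem:upper-snapshot-distance}.

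Either argument supplies the missing bound, but as written your proposal does not.
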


Finally, combining all the above analyzes in this section, we can complete the proof of our main theorem for the near-optimal upper bound of $\tilde \gO(m+ m^{1/7} \epsilon^{-2/7})$ by appropriately choosing the parameters to balance the gradient and Hessian evaluations as follows.

\begin{proof}[Proof of Theorem~\ref{thm:ALEN}]
Set $\gamma=L/m^3$, so $\kappa=m^3+2$, and choose $S,N$
as in Lemmas~\ref{lem:reduce-gap} and~\ref{lem:upper-gap-reduction}.
These lemmas guarantee Condition~\ref{cond:cubic-proxi}.
By Lemma~\ref{lem:finite-segment-search}, each of the
$S=\mathcal{O}(\log\kappa)$ phases uses one Hessian evaluation
and $\mathcal{O}(\kappa^{1/3}\log^{8/3}\kappa)$ gradient evaluations.
Including the three CRN steps, each proximal call therefore uses
\[
 n_2 = \mathcal{O}(\log\kappa), \qquad n_1=\mathcal{O}(\kappa^{1/3}\log^{11/3}\kappa).
\]
Lemma~\ref{lem:upper-schedule-simulation} implements each such call
in $\mathcal{O}(n_1+m(n_2+1))
=\mathcal{O}(m\log^{11/3}m)$ iterations under the fixed
snapshot schedule. Multiplying this cost by the outer iteration
bound~\eqref{eq:upper-outer-count} gives
\[
\mathcal{O}\!\left(
m\log^{11/3} m
\left(1+\left(\frac{LD^3}{m^3\epsilon}\right)^{2/7}\right)
\right)
=
\mathcal{O}\!\left(
\left(m+m^{1/7}\left(\frac{LD^3}{\epsilon}\right)^{2/7}\right)
\log^{11/3}m
\right).
\]
The additional outer gradient queries are absorbed by this bound,
proving \eqref{eq:upper-main}.
\end{proof}

\section{Conclusion and Future Works}


In this paper, we study the oracle complexity of 
convex optimization using $m$-lazy second-order oracles. We prove a lower bound of $\Omega(m +m^{1/7} \epsilon^{-2/7})$ and a nearly matching upper bound of $\tilde \gO(m+m^{1/7} \epsilon^{-2/7})$.

Looking ahead, our work opens up several promising avenues for future research. 
On the theoretical side, while this paper resolves the convex setting up to logarithmic factors, the complexity for nonconvex settings remains an open, where the best-known upper and lower bounds \citep{chen2026faster} for finding an $\epsilon$-stationary point are $\gO(m+m^{1/3} \epsilon^{-3/2})$ and $\Omega(\epsilon^{-3/2})$, respectively.
It is also interesting to study optimality under the cost model for solving minimax problems~\cite{chen2025secondorder,chen2025solving} and nonlinear equations~\cite{liu2025enhanced}.
On the practical side, we hope our work can serve as a principled starting point for designing efficient (inexact) second-order optimizers in practice, especially given the ubiquity of lazy Hessian evaluation techniques in modern deep learning optimizers such as KFAC \citep{martens2015optimizing,grosse2016kronecker}, Sophia \citep{liu2024sophia}, Shampoo \citep{gupta2018shampoo,shi2023distributed,anil2020scalable}, SOAP \citep{vyas2025soap}, ASGO \citep{an2026asgo}, and Newton-Muon \citep{du2026newton}.




\bibliographystyle{plainnat}
\bibliography{sample}

@article{nesterov2018lectures,
  title={Lectures on Convex Optimization},
  author={Nesterov, Yurii},
  volume={87},
  year={2018},
  publisher={Springer},
  city={Cham}
}

@inproceedings{doikov2023second,
 title={Second-Order Optimization with Lazy {H}essians},
  author={Doikov, Nikita and Chayti, El Mahdi and Jaggi, Martin},
  booktitle={International Conference on Machine Learning},
  year={2023},
}

@article{chen2026optimal,
  title={Optimal Convex Optimization with Inexact Second-Order Oracles},
  author={Chen, Lesi and Liu, Chengchang and Luo, Luo and Lui, John and Zhang, Jingzhao},
  journal={arXiv preprint arXiv:2607.24520},
  year={2026}
}

@article{monteiro2013accelerated,
  title={An accelerated hybrid proximal extragradient method for convex optimization and its implications to second-order methods},
  author={Monteiro, Renato D.C. and Svaiter, B. F.},
  journal={SIAM J. Optim.},
  volume={23},
  number={2},
  pages={1092--1125},
  year={2013},
  publisher={SIAM}
}

@article{chayti2023unified,
  title={Unified Convergence Theory of Stochastic and Variance-Reduced Cubic Newton Methods},
  author={Chayti, El Mahdi and Doikov, Nikita and Jaggi, Martin},
  journal={Trans. Mach. Learn. Res.},
  year={2024}
}

@article{monteiro2012iteration,
  title={Iteration-complexity of a {N}ewton proximal extragradient method for monotone variational inequalities and inclusion problems},
  author={Monteiro, Renato D.C. and Svaiter, B. F.},
  journal={SIAM J. Optim.},
  volume={22},
  number={3},
  pages={914--935},
  year={2012},
  publisher={SIAM}
}

@inproceedings{adil2025balancing,
  title={Balancing Gradient and Hessian Queries in Non-Convex Optimization},
  author={Adil, Deeksha and Bullins, Brian and Sidford, Aaron and Zhang, Chenyi},
  booktitle={Advances in Neural Information Processing Systems},
  year={2025}
}

@inproceedings{chen2025solving,
  title={Solving Convex-Concave Problems with $\tilde{O}(\epsilon^{-4/7})$ Second-Order Oracle Complexity},
  author={Chen, Lesi and Liu, Chengchang and Luo, Luo and Zhang, Jingzhao},
  booktitle={Annual Conference on Learning Theory},
  pages={952--982},
  year={2025},
  organization={PMLR}
}

@article{nesterov2006cubic,
  title={Cubic regularization of Newton method and its global performance},
  author={Nesterov, Yurii and Polyak, Boris T.},
  journal={Math. Program.},
  volume={108},
  number={1},
  pages={177--205},
  year={2006},
  publisher={Springer}
}

@article{arjevani2019oracle,
  title={Oracle complexity of second-order methods for smooth convex optimization},
  author={Arjevani, Yossi and Shamir, Ohad and Shiff, Ron},
  journal={Math. Program.},
  volume={178},
  number={1},
  pages={327--360},
  year={2019},
  publisher={Springer}
}

@article{nesterov2008accelerating,
  title={Accelerating the cubic regularization of Newton’s method on convex problems},
  author={Nesterov, Yurii},
  journal={Math. Program.},
  volume={112},
  number={1},
  pages={159--181},
  year={2008},
  publisher={Springer}
}

@inproceedings{carmon2022optimal,
  title={Optimal and adaptive {M}onteiro-{S}vaiter acceleration},
  author={Carmon, Yair and Hausler, Danielle and Jambulapati, Arun and Jin, Yujia and Sidford, Aaron},
  booktitle={Advances in Neural Information Processing Systems},
  year={2022},
  pages = {20338--20350}
}

@inproceedings{nesterov1983method,
  title={A method for solving the convex programming problem with convergence rate ${O}(1/k^2)$},
  author={Nesterov, Yurii},
  booktitle={Dokl akad nauk Sssr},
  volume={269},
  pages={543},
  year={1983}
}

@inproceedings{kovalev2022first,
  title={The first optimal acceleration of high-order methods in smooth convex optimization},
  author={Kovalev, Dmitry and Gasnikov, Alexander},
  booktitle={Advances in Neural Information Processing Systems},
  year={2022},
pages={35339--35351},
}

@inproceedings{
chen2025secondorder,
title={Second-Order Min-Max Optimization with Lazy {H}essians},
author={Lesi Chen and Chengchang Liu and Jingzhao Zhang},
booktitle={International Conference on Learning Representations},
year={2025}
}

@article{nemirovskij1983problem,
  title={Problem complexity and method efficiency in optimization},
  author={Nemirovskij, Arkadij Semenovi{\v{c}} and Yudin, David Borisovich},
  year={1983},
  publisher={Wiley-Interscience},
  note = {(in Russian)}
}

@article{anil2020scalable,
  title={Scalable second order optimization for deep learning},
  author={Anil, Rohan and Gupta, Vineet and Koren, Tomer and Regan, Kevin and Singer, Yoram},
  journal={arXiv preprint arXiv:2002.09018},
  year={2020}
}

@inproceedings{gupta2018shampoo,
  title={Shampoo: Preconditioned stochastic tensor optimization},
  author={Gupta, Vineet and Koren, Tomer and Singer, Yoram},
  booktitle={International Conference on Machine Learning},
  year={2018}
}

@article{shi2023distributed,
  title={A distributed data-parallel pytorch implementation of the distributed shampoo optimizer for training neural networks at-scale},
  author={Shi, Hao-Jun Michael and Lee, Tsung-Hsien and Iwasaki, Shintaro and Gallego-Posada, Jose and Li, Zhijing and Rangadurai, Kaushik and Mudigere, Dheevatsa and Rabbat, Michael},
  journal={arXiv preprint arXiv:2309.06497},
  year={2023}
}

@InProceedings{chen2026faster,
  title = 	 {Faster Newton Methods for Convex and Nonconvex Optimization in Gradient Complexity},
  author =       {Chen, Lesi and Liu, Chengchang and Luo, Luo and Zhang, Jingzhao},
  booktitle = 	 { Conference on Learning Theory},
  year = 	 {2026},
}

@inproceedings{liu2024sophia,
  title={Sophia: A scalable stochastic second-order optimizer for language model pre-training},
  author={Liu, Hong and Li, Zhiyuan and Hall, David and Liang, Percy and Ma, Tengyu},
  booktitle={International conference on learning representations},
  year={2024}
}

@article{du2026newton,
  title={The {N}ewton-{M}uon optimizer},
  author={Du, Zhehang and Su, Weijie},
  journal={arXiv preprint arXiv:2604.01472},
  year={2026}
}

@inproceedings{vyas2025soap,
  title={{SOAP}: Improving and stabilizing shampoo using adam for language modeling},
  author={Vyas, Nikhil and Morwani, Depen and Zhao, Rosie and Shapira, Itai and Brandfonbrener, David and Janson, Lucas and Kakade, Sham},
  booktitle={International Conference on Learning Representations},
  year={2025}
}

@article{an2026asgo,
  title={{ASGO}: Adaptive structured gradient optimization},
  author={An, Kang and Liu, Yuxing and Pan, Rui and Ren, Yi and Ma, Shiqian and Goldfarb, Donald and Zhang, Tong},
  journal={Advances in Neural Information Processing Systems},
  year={2026}
}

@inproceedings{martens2015optimizing,
  title={Optimizing neural networks with kronecker-factored approximate curvature},
  author={Martens, James and Grosse, Roger},
  booktitle={International conference on machine learning},
  year={2015}
}

@inproceedings{grosse2016kronecker,
  title={A kronecker-factored approximate fisher matrix for convolution layers},
  author={Grosse, Roger and Martens, James},
  booktitle={International Conference on Machine Learning},
  year={2016}
}

@inproceedings{liu2025enhanced,
  title={An Enhanced Levenberg--Marquardt Method via Gram Reduction},
  author={Liu, Chengchang and Luo, Luo and Lui, John CS},
  booktitle={Proceedings of the AAAI conference on artificial intelligence},
  volume={39},
  number={18},
  pages={18772--18779},
  year={2025}
}

\appendix

\section{Missing Proofs of the Lower Bound}

\subsection{Proof of Lemma \ref{lem:block-zero-chain}}

\begin{proof}
For the function $f_{2T}: \sR^d \rightarrow \sR$ defined in Eq. (\ref{eq:block-zc}), we define 
$\vy \in \sR^{2T}$ be defined by the vector whose $i$th element is
\begin{align*}
    y_{[i]} = \langle \vx, \vv_i \rangle, \quad i = 1,\cdots, 2T.
\end{align*}
Next, we show that there exists an adversarial strategy of choosing the orthonormal vector set $\{\vv_t \}_{t=1}^{2T}$ such that Eq. (\ref{eq:ortho-vectors}) holds. The key observation is that the chain-like structure of the function $f_{2T}$ allows the adversary to simulate the query of any $t$-iteration deterministic algorithm \texttt{A} of the form (\ref{eq:alg-class}) with the truncated function (using the same notation in rotated coordinates and for its pullback to $\vx$) that only depends on $\vv_1,\cdots,\vv_t$:
\begin{align} \label{eq:hard-ins-truncated-y}
\begin{split}
    f_t(\vy)
    \triangleq& f_{2T}(\vy) \mid \{ y_{[i]}  = 0, ~i  = t+1,\cdots,2T\} \\
    =& \lambda \left( (a- y_{[1]})^2 + \sum_{i=1}^{m-1} (y_{[i]} - y_{[i+1]})^2  \right)^{3/2} \\
    & +\lambda \sum_{s=1}^{\lfloor t/m \rfloor-1} \left( \sum_{i=sm}^{(s+1)m-1} (y_{[i]} - y_{[i+1]})^2  \right)^{3/2} + \lambda \left( \sum_{i=m \lfloor t/m \rfloor}^{t-1} (y_{[i]} - y_{[i+1]})^2 + (y_{[t]}^2) \right)^{3/2}.
\end{split} 
\end{align}
Here the displayed expansion applies to $m\le t<2T$. For $1\le t<m$, it is instead
\[
f_t(\vy)=\lambda\left((a-y_{[1]})^2+\sum_{i=1}^{t-1}(y_{[i]}-y_{[i+1]})^2+y_{[t]}^2\right)^{3/2}.
\]
In either case, $f_t$ is independent of $y_{[t+1]},\ldots,y_{[2T]}$.
Specifically, the adversary can define the vectors $\vv_1,\cdots,\vv_{2T}$ as follows:
\begin{enumerate}
    \item At the initialization $\vx_0 = \vzero$, pick any unit vector $\vv_1 \in \sR^{d}$, and return
    the oracle of $f_1$ at $\vx_0$. Its gradient is $-3\lambda a^2\vv_1$; when $m=1$, also return its Hessian $6\lambda a\vv_1\vv_1^\top$.
    \item For any $t = 1,\cdots,T-1$, repeat the following process: The adversary
    simulates the current query~$\vx_t $  by executing the same update of algorithm $\texttt{A}_t$, and then picks a new unit $\vv_{t+1} \in \sR^{d}$ that is orthogonal to all previous basis $\vv_{1},\cdots,\vv_{t}$ and queries $\vx_1,\cdots,\vx_t$:
    \begin{align} \label{eq:select-vk}
            \langle \vv_{t+1}, \vv_i \rangle = 0 \quad {\rm and} \quad \langle \vv_{t+1}, \vx_i \rangle =0, \quad \forall i = 1,\cdots,t.
        \end{align}
        Such a vector $\vv_{t+1} \in \sR^d$ always exists when $d \ge 2t+1$. After that, the adversary returns its oracle using that of the truncated function~$f_{t+1}$ rather than the full function $f_{2T}$. 
    \item Finally, for the algorithm output $\vx_T$, the adversary picks an orthonormal vector set $\{\vv_{T+1},\cdots, \vv_{2T}\}$ that is orthogonal to the subspace ${\rm span}\{\vx_1,\cdots, \vx_T, \vv_1,\cdots,\vv_T\}$, which is possible when $d \ge 3T$.
\end{enumerate}
It remains to prove that, in the $t$-th iteration, the oracle of the truncated function $f_{t}$ returned by the above adversarial strategy is consistent with the oracle of the full function $f_{2T}$ at its query point $\vx_{t-1}$. In the following, we verify this property for both gradient and Hessian queries:
\paragraph{Consistency of Gradient queries.} For $t=1,\ldots,T$, the $t$th response is evaluated at $\vx_{t-1}$. Write $(y_{t-1})_{[i]}=\langle\vx_{t-1},\vv_i\rangle$. The adversary ensures that $(y_{t-1})_{[i]}=0$ for all $i=t,\ldots,2T$. The restriction defining $f_t$ preserves all partial derivatives in coordinates $1,\ldots,t$. For $i>t$, every edge difference contributing to the $i$th gradient coordinate is zero. Consequently,
\begin{align*}
    \frac{\partial f_t(\vy_{t-1})}{\partial y_{[i]}}=\frac{\partial f_{2T}(\vy_{t-1})}{\partial y_{[i]}},\quad i=1,\ldots,2T.
\end{align*}
Thus the gradient responses are consistent.
\paragraph{Consistency of Hessian queries.} For $t=m,2m,\ldots,m\lfloor T/m\rfloor$, the algorithm queries $\sO_t(\vx_{t-1})$. At these block boundaries, the truncated function is
\begin{align*}
f_t(\vy)=&\lambda\left((a-y_{[1]})^2+\sum_{i=1}^{m-1}(y_{[i]}-y_{[i+1]})^2\right)^{3/2}+\lambda\sum_{s=1}^{t/m-1}\left(\sum_{i=sm}^{(s+1)m-1}(y_{[i]}-y_{[i+1]})^2\right)^{3/2}+\lambda|y_{[t]}|^3.
\end{align*}
The displayed complete blocks coincide with those of $f_{2T}$. All remaining blocks of $f_{2T}$ have zero edge differences at $\vy_{t-1}$, and $y_{[t]}=0$ there. Since the gradient and Hessian of $\|\cdot\|^3$ vanish at zero, the terminal term of $f_t$ and these remaining blocks have zero gradients and Hessians. Therefore,
\begin{align*}
\frac{\partial^2 f_t(\vy_{t-1})}{\partial y_{[i]}\partial y_{[j]}}=\frac{\partial^2 f_{2T}(\vy_{t-1})}{\partial y_{[i]}\partial y_{[j]}},\quad i,j=1,\ldots,2T.
\end{align*}
Hence the full Hessian responses are consistent at every Hessian query point $\vx_{t-1}$.
\end{proof}

\subsection{Proof of Lemma \ref{lem:subgap}}

\begin{proof}
Following the notations from the previous section, Eq. (\ref{eq:ortho-vectors}) implies that the last $T$ coordinates of the rotated variable $\vy_T$ are all zero. Therefore, with a change of variable from $\vx$ to $\vy$, we have $f_{2T}(\vy_T) = f_{T}(\vy_T)$, where $f_T$ is the truncated function defined in Eq. (\ref{eq:hard-ins-truncated-y}) with $t=T$. Next, let us lower-bound the value of $f_T(\vy)$ for any $\vy \in \sR^{2T}$. Using  Jensen's inequality with $\sum_{i=1}^m a_i^2 \ge \left( \sum_{i=1}^m a_i \right)^2 / m$ in each block of Eq. (\ref{eq:hard-ins-truncated-y}) yields that
\begin{align*}
    f_T(\vy) \ge& \frac{\lambda}{m^{3/2}} \left \vert a- y_{[m]}\right \vert^{3} + \frac{\lambda}{m^{3/2}} \sum_{s=1}^{\lfloor T/m \rfloor-1} \left\vert y_{[sm]} - y_{[(s+1)m]}  \right \vert^{3} + \frac{\lambda}{m^{3/2}} \left \vert  y_{[m \lfloor T/m \rfloor]} \right\vert^{3}, \quad \forall \vy \in \sR^{2T}.
\end{align*}
Next, we apply Jensen's inequality with $\sum_{i=1}^n |a_i|^3 \ge \left( \sum_{i=1}^n |a_i| \right)^3 / n^2$ for $n = \lfloor T/m \rfloor +1$ terms in the above to obtain that
\begin{align*}
    f_T(\vy) \ge \frac{\lambda}{m^{3/2} \left( \lfloor T/m \rfloor +1  \right)^2} \left( \left \vert a- y_{[m]}\right \vert + \sum_{s=1}^{\lfloor T/m \rfloor-1} \left\vert y_{[sm]} - y_{[(s+1)m]}  \right \vert + \left \vert  y_{[m \lfloor T/m \rfloor]} \right\vert \right)^3, \quad \forall \vy \in \sR^{2T}.
\end{align*}
To proceed, we apply the triangle inequality with $\sum_{i=1}^n \vert a_i \vert \ge \left \vert \sum_{i=1}^n  a_i  \right \vert$ to further obtain that 
\begin{align*}
    f_T(\vy) \ge \frac{\lambda a^3}{m^{3/2} \left( \lfloor T/m \rfloor +1  \right)^2}, \quad \forall \vy \in \sR^{2T}.
\end{align*}
\end{proof}

\subsection{Proof of Theorem \ref{thm:lower-bound-convex}}

\begin{proof}
    From the form of the function $f_{2T}(\vx)$ defined in Eq. (\ref{eq:block-zc}), it is easy to see that its minimizer $\vx^*$ is achieved under the following constraint:
\begin{align*}
    \langle \vx^*, \vv_i \rangle = a, \quad \forall i = 1,\cdots,2T.
\end{align*}
Taking $\vx^*=a\sum_{i=1}^{2T}\vv_i$, we have $\Vert \vx^* \Vert = a \sqrt{2T}$, which means that $\Vert \vx^* \Vert \le D$ if we set $a = D / \sqrt{2T}$. Next, we justify the choice of~$\lambda$ that can ensure the function to have $L$-Lipschitz continuous Hessians. Note that $f_{2T}(\vx)$ is the summation of $\lfloor 2T/m \rfloor+1$ blocks, and each coordinate only appears at most in two adjacent blocks. Therefore, it suffices to upper-bound the Hessian Lipschitz constant of each block, and the final constant is at most multiplied by~2. Define the matrix $\mA \in \sR^{m \times (m+1)}$ as
\begin{align*}
\mA = 
\begin{bmatrix}
        1 & -1 & 0 & \cdots & 0 \\
        0 & 1 & -1 & \cdots & 0 \\
        \vdots &  \ddots & \ddots & \vdots \\
        0 & \cdots & \cdots & 1 & -1 \\
\end{bmatrix}.
\end{align*}
Then each block has the form of 
\begin{align*}
    g_s(\vy) = \lambda \Vert \mA \vy^{(s)} \Vert^3, \quad \vy^{(s)} = \left( \langle \vx, \vv_{sm} \rangle, \cdots, \langle \vx, \vv_{(s+1)m} \rangle \right)^\top.
\end{align*}
Since the function $\Vert \,\cdot\, \Vert^3$ has $6$-Lipschitz continuous Hessians \citep[Lemma 4.2.4]{nesterov2018lectures} and $\Vert \mA \Vert \le 2$, it is easy to check that $g_s(\vy)$ has $48 \lambda$-Lipschitz continuous Hessians. Hence, we can conclude that the final function $f_{2T}(\vx)$ has $96\lambda$-Lipschitz continuous Hessians. In other words, we can let $\lambda = L / 96$ to let the Hessian Lipschitz constant be bounded by $L$. The first and final (possibly incomplete) blocks obey the same bound by restricting the difference matrix and allowing an affine shift. Finally, since $T\ge m$ gives $\lfloor T/m\rfloor+1\le2T/m$ and $f_{2T}(\vx^*) = 0$, substituting the parameters $a = D / \sqrt{2T}$ and $\lambda = L / 96$ into inequality (\ref{eq:lb-suboptimality-gap})  yields that
\begin{align*}
    f_{2T}(\vx_T) - f_{2T}(\vx^*) = \Omega \left( \frac{\sqrt{m} L D^3}{T^{7/2}} \right).
\end{align*}
 In other words, to find an $\epsilon$-solution the algorithm has to take at least $T = \Omega( m^{1/7} (L D^3 /\epsilon)^{2/7} )$ iterations.
\end{proof}

\section{Missing Proofs of the Upper Bound}

\subsection{Proof of Lemma \ref{lem:reduce-gap}}

\begin{proof}
Lemma~\ref{lem:sub-prob-grad-dom}(2) shows that
\begin{equation}\label{eq:reduce-gap-certificate}
f_{\vx,\gamma}(\vy_0)-f_{\vx,\gamma}^*\le\frac{2\sqrt2}{3\sqrt\gamma}
\|\nabla f_{\vx,\gamma}(\vy_0)\|^{3/2}=\Delta_0.
\end{equation}
For convenience, write $r_*=\|\vy_{\vx,\gamma}^*-\vx\|$. Then, for the initialization $\vz_0=\mathcal{C}_H(\vx)$ and
$\vy_0=\mathcal{C}_H(\vz_0)$, applying Lemma~\ref{lem:crn}(2) to the
first step and Lemma~\ref{lem:crn}(1) to the second gives
\[
f_{\vx,\gamma}(\vz_0)-f_{\vx,\gamma}^*\le\frac H3 r_* ^3,
\qquad
\|\nabla f_{\vx,\gamma}(\vy_0)\|^{3/2}
\le3\sqrt H\bigl(f_{\vx,\gamma}(\vz_0)-f_{\vx,\gamma}^*\bigr)
\le H^{3/2} r_*^3.
\]
Therefore $\Delta_0\le(2\sqrt2/3)\gamma\kappa^{3/2}r_*^3
\le50\gamma\kappa^{3/2}r_*^3$. Condition~\ref{cond:gapreduce}
and the stated choice of $S$ imply
\begin{equation}\label{eq:reduce-gap-final-certificate}
f_{\vx,\gamma}(\vy_S)-f_{\vx,\gamma}^*\le\Delta_S=(3/4)^S\Delta_0
\le\frac{\gamma r_*^3}{384^{3/2}\sqrt\kappa}.
\end{equation}
Applying Lemma~\ref{lem:crn}(1) to the final step
$\hat\vy=\mathcal{C}_H(\vy_S)$ yields
\[
\|\nabla f_{\vx,\gamma}(\hat\vy)\|
\le3^{2/3}H^{1/3}\Delta_S^{2/3}
\le\frac{\gamma r_*^2}{128}.
\]
The second inequality in (\ref{eq:cubic-grad-uniform}) then gives
$\|\hat\vy-\vy_{\vx,\gamma}^*\|\le2^{1/3}r_*/8\le r_*/4$.
Thus $\|\hat\vy-\vx\|\ge3r_*/4$, and consequently the output $\hat \vy$ provably satisfies Condition \ref{cond:cubic-proxi}. 
\[
\|\nabla f_{\vx,\gamma}(\hat\vy)\|
\le\frac{\gamma}{72}\|\hat\vy-\vx\|^2
\le\frac{\gamma}{32}\|\hat\vy-\vx\|^2.
\]
\end{proof}

\subsection{Proof of Lemma \ref{lem:finite-segment-search}}

\begin{proof}

We verify that $M/\Delta=\mathcal{O}(\kappa)$. Let $R=(6\Delta/\gamma)^{1/3}$. The second inequality in~(\ref{eq:cubic-grad-uniform}) gives, \[
f_{\vx,\gamma}(\vw)-f_{\vx,\gamma}^*\ge\frac{\gamma}{6} \|\vw-\vy_{\vx,\gamma}^*\|^3, \qquad \forall \vw \in \sR^d.
\] 
In particular, $\|\vy-\vy_{\vx,\gamma}^*\|\le R$. Let $\Xi=(J+1)^{4/3}\kappa^{2/3}$ and $W = \sum_{i=0}^J \lambda^i$. Then, $A_k \le k^2 \le N^2 = \gO(\Xi)$. Since $g(\vx_k)\ge W f_{\vx,\gamma}^*$, Lemma~\ref{lem:comparator} and inequality~(\ref{eq:upper-potential-bound}) imply
\begin{align} \label{eq:potential-bound}
\|\vv_k-\bar{\vv}\|_\mB^2/2
\le E_0+A_k(0.643\Delta+\zeta)=\mathcal{O}(\Xi \Delta).    
\end{align}
By the definition of $\mB$ in Eq. (\ref{eq:upper-metric}), we have \(\mB \succeq 100 (J+1)^{4/3}(H^2 \Delta)^{1/3} \mI_d\). Therefore, we can obtain that
\[
\| \vv_k - \bar \vv \|^2 = \gO( \kappa^{2/3} \Delta^{2/3} / H^{2/3} ) = \gO( (\Delta/\gamma)^{2/3}) = \gO(R^2).
\]
This implies $\| \vv_k - \bar \vv \| =\gO(R) $. Also, by the definition of $\bar \vv$ in Lemma \ref{lem:comparator}, we have \( \| \bar \vv - \vy \| = c^{-J} \| \vy - \vy_{\vx,\gamma}^* \| \le R/2 \) for $c = 1+1/J$, so the triangle inequality immediately gives $\| \vv_k - \vy_{\vx,\gamma}^* \| = \gO(R) $. Furthermore, we recall inequality (\ref{eq:upper-individual-distance}) that $\|\vx_k-\vy_{\vx,\gamma}^*\|\le W^{1/3}R \le 2R$ and $\|\vx_k-\vy\|\le(1+W^{1/3})R\le3R$. Since every point of $\vy' \in [\vy_{\vx,\gamma}^*,\vx_k]$ is within $3R$ of $\vy$, Hessian Lipschitzness by Lemma \ref{lem:sub-prob-grad-dom} gives 
\[
\nabla^2f_{\vx,\gamma}(\vy')\succeq \nabla^2f_{\vx,\gamma}(\vy)-3HR\mI_d, \qquad \forall \vy' \in [\vy_{\vx,\gamma}^*,\vx_k].
\]
Since $\nabla f_{\vx,\gamma}(\vy_{\vx,\gamma}^*) =0$ at the minimizer $\vy_{\vx,\gamma}^*$, integrating from $\vy_{\vx,\gamma}^*$ yields
\begin{align*}
    f_{\boldsymbol{x}, \gamma}(\boldsymbol{x}_k) - f_{\boldsymbol{x}, \gamma}^* =&  \int_0^1 (1-t) (\boldsymbol{x}_k - \vy_{\vx,\gamma}^*)^\top \nabla^2 f_{\boldsymbol{x}, \gamma}(\vy_{\vx,\gamma}^* + t(\vx_k - \vy_{\vx,\gamma}^*)) (\boldsymbol{x}_k - \vy_{\vx,\gamma}^*) \, {\rm d} t \\
    \ge& \left( \int_0^1 (1-t) \, {\rm d} t \right) \cdot \left( \Vert{}\boldsymbol{x}_k - \vy_{\vx,\gamma}^*\Vert{}_{\nabla^2f_{\vx,\gamma}(\vy)}^2 - 3HR \Vert{}\boldsymbol{x}_k - \vy_{\vx,\gamma}^*\Vert{}^2 \right).
\end{align*}
Using $\int_0^1 (1-t) \, {\rm d} t =1/2$ and rearranging the above inequality gives
\[
\|\vx_k-\vy_{\vx,\gamma}^*\|_{\nabla^2f_{\vx,\gamma}(\vy)}^2
\le2\bigl(f_{\vx,\gamma}(\vx_k)-f_{\vx,\gamma}^*\bigr)
+3HR\|\vx_k-\vy_{\vx,\gamma}^*\|^2
=\mathcal{O}(\kappa\Delta).
\]
The definition of $\mB$ in Eq. (\ref{eq:upper-metric}) also indicates that
$\mB\succeq2(J+1) \nabla^2 f_{\vx,\gamma}(\vy)$. Hence, the potential bound~(\ref{eq:potential-bound}) indicates that \[
\| \vv_k - \bar \vv \|_{\nabla^2 f_{\vx,\gamma}(\vy)}^2  \le \frac{1}{2(J+1)}  \| \vv_k - \bar \vv \|_{\mB}^2  = \gO( (J+1)^{1/3} \kappa^{2/3} \Delta ).
\] 
In addition, Lemma~\ref{lem:upper-snapshot-distance} and $\bar{\vv}-\vy_{\vx,\gamma}^*=(1-c^{-J})(\vy-\vy_{\vx,\gamma}^*)$ give 
\[
\| \bar \vv - \vy_{\vx,\gamma}^*\|_{\nabla^2 f_{\vx,\gamma}(\vy)}^2 =  \gO(\kappa^{2/3} \Delta).
\]
Thus, by the triangle inequality, it is easy to obtain that
\[
\|\vv_k-\vy_{\vx,\gamma}^*\|_{\nabla^2 f_{\vx,\gamma}(\vy)}^2 = 
\mathcal{O}\bigl(((J+1)^{1/3}\kappa^{2/3}+\kappa^{2/3})\Delta\bigr)
=\mathcal{O}(\kappa\Delta),
\]
where $J=\mathcal{O}(\log\kappa)=\mathcal{O}(\kappa)$. Combining all the above analyses, we have 
\[
\| \vv_k - \vx_k  \|_{\nabla^2 f_{\vx,\gamma}(\vy)}^2 = \gO(\kappa \Delta), \quad \| \vv_k - \vx_k  \| = \gO(R), \quad H \rho \| \vv_k - \vx_k \|^2 = \gO(HR^3) = \gO(\kappa \Delta).
\]
Recalling the definition of $M$ in Eq. (\ref{eq:upper-line-lipschitz}), we have $M = \gO(\kappa \Delta)$. 
Since $\zeta=\Delta/100$, each binary search uses at most $3+\lceil\log_2^+(M/\zeta)\rceil=\mathcal{O}(\log\kappa)$ gradients of $g$, including the two endpoint queries. Each gradient of $g$ costs $J+1$ gradients of $f_{\vx,\gamma}$. Adding the $N$ descent-gradient queries to the cost of the $N$ searches, and using $N=\mathcal{O}((J+1)^{2/3}\kappa^{1/3})$ and $J+1=\mathcal{O}(\log\kappa)$, gives a total of $\mathcal{O}(N(J+1)(1+\log\kappa))=\mathcal{O}(\kappa^{1/3}\log^{8/3}\kappa)$ gradient evaluations of $f_{\vx,\gamma}$, as claimed in Lemma~\ref{lem:upper-gap-reduction}.


\end{proof}

\end{document}